\renewcommand{\leq}{\leqslant}
\date{}
\newtheorem{theorem}{Theorem}[section]
\newtheorem{lemma}{Lemma}[section]
\newtheorem{remark}{Remark}[section]
\newtheorem{example}{Example}[section]
\numberwithin{equation}{section}
\newcommand{\zd}{\,\mathrm{d}}
\newcommand{\diff}{\triangledown_{\tau}}
\newcommand{\abs}[1]{\left|#1\right|}
\newcommand{\brab}[1]{\big(#1\big)}
\newcommand{\braB}[1]{\Big(#1\Big)}
\newcommand{\myinner}[1]{\left\langle#1\right\rangle}
\newcommand{\myinnerb}[1]{\big\langle#1\big\rangle}
\newcommand{\mynorm}[1]{\left\|#1\right\|}
\newcommand{\mynormb}[1]{\big\|#1\big\|}
\newcommand{\zhur}[1]{{\color{black}#1}}
\begin{document}
\title{Energy stability and convergence of variable-step L1 scheme for the time fractional Swift-Hohenberg model}
\author{
Xuan Zhao\thanks{Corresponding author. School of
		Mathematics, Southeast University, Nanjing 210096, P. R. China (xuanzhao11@seu.edu.cn).}
\quad Ran Yang \thanks{School of
Mathematics, Southeast University, Nanjing 210096, P. R. China. (220191527@seu.edu.cn).}
\quad Ren-jun Qi \thanks{School of
	Mathematics, Southeast University, Nanjing 210096, P. R. China. (rjqi97@163.com).}
\quad Hong Sun\thanks{School of
	Mathematics, Southeast University, Nanjing 210096, P. R. China and Department of Mathematics and Physics, 
Nanjing Institute of Technology, Nanjing 211167, P. R. China (sunhongzhal@126.com).}
}

\date{}
\maketitle
\normalsize
\begin{abstract}
A fully implicit numerical scheme is established for solving the time fractional Swift-Hohenberg (TFSH) equation with a Caputo time derivative of order $\alpha\in(0,1)$. The variable-step L1 formula and the finite difference method are employed for the time and the space discretizations, respectively. The unique solvability of the numerical scheme is proved by the Brouwer fixed-point theorem. With the help of the discrete convolution form of L1 formula, the time-stepping scheme is shown to preserve a discrete energy dissipation law which is asymptotically compatible with the classic energy law as $\alpha\to1^-$. Furthermore, the $L^\infty$ norm boundedness of the discrete solution is obtained. Combining with the global consistency error analysis framework, the $L^2$ norm convergence order is shown rigorously. Several numerical examples are provided to illustrate the accuracy and the energy dissipation law of the proposed method. In particular, the adaptive time-stepping strategy is utilized to capture the multi-scale time behavior of the TFSH model efficiently.
	

\noindent{\emph{Keywords}:}\; time fractional Swift-Hohenberg equation; variable-step L1 formula; energy dissipation law; unique solvability; convergence \\
\end{abstract}

\section{Introduction}\setcounter{equation}{0}
The classic Swift-Hohenberg (SH) equation, first derived by Jack Swift and Hohenberg through the study of the thermal convection of the Rayleigh-B$\acute{\rm e}$nard instability \cite{SH}, has extensive applications in the modeling and simulation of the pattern formations \cite{Kud2012,Rosa2000,Ibbeken2019,leesym,Lega}. As an important phase field model, the SH equation \cite{leecmame} is viewed as a gradient flow with the following Lyapunov energy functional  
\begin{align}\label{def:continuous energy}
E[u]:=\int_{\Omega}\frac{1}{2}u(1+\Delta)^2 u+F(u) \zd\mathbf{x},
\end{align}
i.e., $\partial_t u:=-\frac{\delta E}{\delta u}$ where $\frac{\delta }{\delta u}$ denotes the variational derivative, $u$ represents the density field and the nonlinear potential
\begin{align*}
	F(u)=\frac{1}{4}u^4-\frac{\mathrm{g}}{3}u^3-\frac{\epsilon}{2}u^2
\end{align*} 
with two physical parameters $\mathrm{g}\ge 0$ and $\epsilon>0$. Under the periodic conditions on the domain $\Omega=(0,L)^2\subset\mathbb{R}^2$, the SH equation satisfies the energy dissipation law as follows
\begin{align}\label{def: continuous energy dissipation law}
\frac{\mathrm{d}}{\mathrm{d}t}E(u)=\left(\dfrac{\delta E}{\delta u},\partial_tu\right)=-\left\|\frac{\delta E}{\delta u}\right\|^2 \le 0, \quad\text{for}~t>0,
\end{align}
where $(\cdot,\cdot)$ and $\lVert \cdot \rVert$ are the $L^2$ inner product and the associated norm. Such an energy dissipation rule, also known as the energy stability plays a crucial role in designing stable numerical schemes for the phase field models in long time simulation \cite{Du1991SINA,XuTang2006SINA}.

Over the past two decades, the fractional differential equations have attracted much attention due to their superiority in the simulation of various materials and processes with memory and hereditary properties \cite{Qureshi,chenwen2006,Cartea}. There is also a tremendous amount of effort has been put into the applications of the fractional type phase field models, see e.g., \cite{zhaojcns2019,shamseldeen,Liz2017,songfcmame2016}, and a vital issue among these investigations is the energy dissipation property. For instance, Tang et al. \cite{tangtsijsc2019} showed that the energies of the time fractional Allen-Cahn (TFAC) equation and the time fractional Cahn-Hilliard (TFCH) equation with Caputo time fractional derivative are bounded by the initial energies. The boundedness of the discrete energies were also established for several finite difference schemes in which the uniform-step L1 formula was utilized for the approximation of Caputo derivative. Quan et al. \cite{quancsiam} defined a nonlocal energy $E_{\omega}(u):=\int_{0}^{1}\omega(\theta)E(\theta t)\mathrm{d}\theta$, which was proved to be dissipative under a mild restriction of the weight function $\omega(t)$, for the TFAC equation and the TFCH equation. Recently, for the time fractional phase field models, Liao et al. \cite{liaozhuwangnmtma,YangJCP2022} constructed some energy dissipation laws, which are asymptotically compatible with the classical model when the fractional order tends to the first order. More related works can be found in \cite{quanwb,quanarxiv}.

Recently, the existing works for the time fractional Swift-Hohenberg (TFSH) equation are mainly devoted to the approximated analytical solutions \cite{vp2020,PD2019,mm2013,kan2011,Rashid2021}, while the research of the numerical treatment is limited. Zahra et al. \cite{zahra2019} proposed a rational spline-nonstandard finite difference scheme for the TFSH equation, where the Gr\"{u}nwald-Letnikov (GL) formula was applied for the discretization of Riemann–Liouville fractional derivative. The Fourier method showed the unconditionally stability and the first order convergence of the scheme. In addition, for solving the cubic-quintic standard and modified TFSH equations with Caputo fractional derivative, several schemes based on the exponential fitting technique in space and the uniform-step GL formula as well as the L1 formula on the graded mesh considering the initial singularity in time were proposed in \cite{zahra2020}. The first order convergence in time was proved for the GL scheme, and the influence of the orders of time fractional derivative and the length on the solution were illustrated graphically. Besides, a Fourier spectral method was proposed for the Swift-Hohenberg equation with a nonlocal nonlinearity in \cite{nonlocalSH}. However, the energy dissipation laws of the numerical schemes are not considered in the existing literatures. Particularly, designing the suitable numerical scheme on the nonuniform time grid is significant for the simulation of the phase field models, whereas the corresponding theoretical analysis is tough.

In this paper, we are concerned with the TFSH equation \cite{kan2011} as follows 
\begin{align}\label{def:the TFSH equation}
    ^C_0D_t^{\alpha}u=-\mu\quad\text{with}~\mu:=(1+\Delta)^2u+f(u),
\end{align}
subjected to the periodic boundary conditions, the initial value $u(\mathrm{x},0)=u_0(\mathrm{x})$, and the bulk force $f(u)=F'(u)$. The Caputo fractional derivative of order $\alpha\in(0,1)$ is defined by
\begin{align*}
	^C_0D_t^{\alpha}v:=\mathcal{I}^{1-\alpha}_tv'(s)
\end{align*}
where $\mathcal{I}^{\beta}_t$ is the fractional Riemann-Liouville integral,
\begin{align*}
	\mathcal{I}^{\beta}_tv(t)=\int_{0}^{t}\omega_\beta(t-s)v(s)\mathrm{d}s\quad\text{with}~\omega_\beta(t)=t^{\beta-1}/\Gamma(\beta).
\end{align*}

The TFSH equation \eqref{def:the TFSH equation} has the following energy dissipation law,
\begin{align}\label{TF energy law}
	\dfrac{\mathrm{d}\mathcal{E_{\alpha}}}{\mathrm{d}t}+\dfrac{1}{2}\omega_{\alpha}(t)\lVert \mu \rVert^2 \le 0,
\end{align}
where the modified variational energy is defined as in \cite{liaottsiamjsc2021}
\begin{align}\label{TF energy}
	\mathcal{E}_{\alpha}[u]:=E[u]+\dfrac{1}{2}\mathcal{I}^{\alpha}_t\lVert \mu \rVert^2.
\end{align}
It is easily to see that \eqref{TF energy law} recovers the energy law \eqref{def: continuous energy dissipation law} of the classic SH equation as $\alpha\to1^-$, that is
\begin{align*}
	\dfrac{\mathrm{d}E}{\mathrm{d}t}+\lVert \mu \rVert^2 \le 0.
\end{align*}

We concentrate on establishing a variable-step L1 scheme for the TFSH equation \eqref{def:the TFSH equation}, the main contributions are listed in the following
\begin{itemize} 
\item By taking advantage of the discrete gradient structure of the L1 formula on the nonuniform mesh, the numerical scheme is proved to satisfy the discrete energy dissipation law and thus reliable for the long time simulation. 
\item The $L^2$ norm error estimate of the numerical scheme is given in virtue of the global consistency error analytical technique and the fractional Gr\"{o}nwall inequality. 
\item The restriction on the temporal mesh in our analysis is mild which permits the utilization of the adaptive time-stepping strategy in practice. Thus, the effects for the order of the fractional derivative and the cubic term in the energy function on the pattern formation are investigated numerically. Besides, the discrete energies of the TFSH model \eqref{def:the TFSH equation} are also shown under different parameters. 
\end{itemize}

The remainder of the paper is organized as follows. In section 2, the variable-step L1 scheme is constructed for the TFSH equation \eqref{def:the TFSH equation} and proved to be unique solvable. In section 3, the discrete energy dissipation law of the numerical scheme is presented in virtue of some discrete kernel tools, then the $L^\infty$ norm boundedness of the discrete solution is obtained immediately. The $L^2$ norm convergence analysis is derived in section 4. Several numerical examples are included in section 5 to verify the accuracy and the energy dissipation of the proposed scheme, the efficiency of the adaptive time-stepping strategy is further demonstrated.

\section{The variable-step L1 scheme}
This section is devoted to the construction of the variable-step L1 scheme for the TFSH equation \eqref{def:the TFSH equation}, and the unique solvability of the numerical scheme is rigorously demonstrated.

\subsection{The fully discrete scheme}

Take a general time grid $0=t_0<t_1<t_2<\cdots<t_N=T$ with a terminated time $T>0$. Denote the variable time-steps $\tau_k:=t_k-t_{k-1}$ for $1\le k\le N$ and the maximum step size $\tau:=\max_{1\le k\le N}\tau_k$. Let the adjacent time-step ratios $r_k:=\tau_k/\tau_{k-1}$ for $2\le k\le N$. For any grid function $v^n=v(t_n)$, denote $\diff v^n:=v^n-v^{n-1}$ and $\partial_\tau v^n:=\diff v^n/\tau_n$. The well-known L1 formula of Caputo derivative reads as
\begin{align}\label{def:L1 formula}
	D_\tau^{\alpha}v^n:=\sum_{k=1}^{n}a_{n-k}^{(n)}\diff v^n,\quad \text{for}~n \ge 1,
\end{align}
where the discrete kernels $a_{n-k}^{(n)}$ are defined by
\begin{align}\label{the defination of a}
	a_{n-k}^{(n)}:=\dfrac{1}{\tau_k}\int_{t_{k-1}}^{t_k}\omega_{1-\alpha}(t_n-s)\mathrm{d}s,\quad \text{for}~ 1\le k \le n.
\end{align}
The positivity and the monotonicity of kernel $\omega_{1-\alpha}(t)$ give the following properties of $a_{n-k}^{(n)}$,
\begin{align}\label{monotonic}
	a_{0}^{(n)}> a_{1}^{(n)} > \cdots> a_{n-1}^{(n)}>0,\quad \text{for}~1\le n \le N.
\end{align}

As for the spatial discretization, the finite difference method is employed. Set the space-sizes $h_x=h_y=h:=L/M$ with a positive integer $M$, and denote $x_i=ih$, $y_j=jh$ and ${\mathrm{x}_h}=(x_i, y_j)$.
The discrete spatial grid
$\Omega_h:=\big\{{\mathrm{x}_h}=(x_i, y_j)~|~1\le i, j\le M-1\big\}$
and $\bar{\Omega}_h:=\big\{{\mathrm{x}_h}=(x_i, y_j)~|~0\le i, j\le M\big\}$.
The $L$-periodic function space is defined as
$$\mathcal{V}_h:=\big\{v_h=v({\mathrm{x}_h})~|~{\mathrm{x}_h}\in \bar{\Omega}_h
\;\text{and $v_h$ is $L$-periodic in each direction}\big\}.$$
For any grid function $v\in \mathcal{V}_h$, some difference notations are introduced as follows:
$\delta_xv_{i+\frac{1}{2},j}=(v_{i+1,j}-v_{ij})/h$, $\delta_xv_{i-\frac{1}{2},j}=(v_{ij}-v_{i-1,j})/h$
and $\delta^2_xv_{ij}=(\delta_xv_{i+\frac{1}{2},j}-\delta_xv_{i-\frac{1}{2},j})/h.$
The discrete notations $\delta_yv_{i,j+\frac{1}{2}}$ and $\delta^2_yv_{ij}$ can be defined similarly.
Also, the standard discrete Laplacian operator $\Delta_hv_{ij}=\delta^2_xv_{ij}+\delta^2_yv_{ij}$ and
the discrete gradient vector $\nabla_hv_{ij}=(\delta_xv_{ij},~\delta_yv_{ij})^T$.

Putting the equation at $(\mathrm{x}_h,t_n)$, then we develop the following variable-step L1 scheme for the TFSH equation \eqref{def:the TFSH equation},
\begin{align}\label{def:discrete scheme}
	D_\tau^{\alpha}u^n_h=-\mu^n_h~~\text{with} ~~~\mu^n_h:=(1+\Delta_h)^2u^n_h+f(u^n_h),\quad\mathrm{x}_h\in\Omega_h\;,1\le n\le N,
\end{align}
subjected to the periodic boundary conditions and the initial value
$u_h^0=u_0(\mathrm{x}_h)$.

For any grid functions $v,w\in\mathcal{V}_h,$ define the discrete inner product
$\myinner{v,w}:=h^2\sum_{{\mathrm{x}_h}\in\Omega_h}v_h w_h$, the associated $L^2$ norm
$\mynorm{v}_{l^2}:=\sqrt{\myinner{v,v}}$ and the $L^q$ norm
$\mynorm{v}_{l^q}:=\sqrt[q]{h^2\sum_{{\mathrm{x}_h}\in\Omega_h}\abs{v_h}^q}$ for $v\in \mathcal{V}_h$.
Here and hereafter, we write $\|v\|:=\|v\|_{l^2}$ and the discrete $L^{\infty}$ norm $\mynorm{v}_\infty:=\max_{{\mathrm{x}_h}\in\Omega_h}|v_h|$ for simplicity. In addition, the discrete Green's formula with the periodic boundary conditions yields
$\myinner{-\Delta_h v,w}=\myinner{\nabla_hv,\nabla_hw}$ and $\myinner{\Delta_h^2v,w}=\myinner{\Delta_hv,\Delta_hw}$.

\subsection{Unique solvability}
Now the unique solvability of the variable-step L1 scheme \eqref{def:discrete scheme} will be proved via the Brouwer fixed-point theorem \cite{Akrivis1993}. For any fixed index $n\ge1$, define the map $\Pi_n:\mathcal{V}_{h}\rightarrow \mathcal{V}_{h}$ as follows:
\begin{align}\label{eq: nonlinear map}
	\Pi_n(w_h):=a_0^{(n)}w_h-g_h^{n-1}+(1+\Delta_h)^2w_h+f(w_h),\quad\mathrm{x}_h\in\bar{\Omega}_h,
\end{align}
where $g_h^{n-1}=a_0^{(n)}u_h^{n-1}-\sum_{k=1}^{n-1}a_{n-k}^{(n)}\diff u^k$. It is obviously that the equation $\Pi_n(u^n)=0$ is equivalent to the numerical scheme \eqref{def:discrete scheme}. Thus, the solvability of the proposed scheme \eqref{def:discrete scheme} can be verified via the equation \zhur{$\Pi_n(w_h)=0$ in the following theorem.}

\begin{theorem} \label{thm:the Unique solvablity}
	If the maximum time step size $\tau$ satisfies
	\begin{align}\label{def:the condition of tau}
		\tau \le \sqrt[\alpha]{\frac{3}{\Gamma(2-\alpha)(4\mathrm{g}^2+3\epsilon)}},
	\end{align}
	then the numerical scheme \eqref{def:discrete scheme} is uniquely solvable.
\end{theorem}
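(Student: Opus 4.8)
The plan is to invoke the standard corollary of the Brouwer fixed-point theorem \cite{Akrivis1993}: a continuous map on the finite-dimensional inner product space $(\mathcal{V}_h,\myinner{\cdot,\cdot})$ possesses a zero inside the closed ball $\{\|w_h\|\le\gamma\}$ provided $\myinner{\Pi_n(w_h),w_h}\ge 0$ holds on the sphere $\|w_h\|=\gamma$ for some radius $\gamma>0$. Since $\Pi_n$ in \eqref{eq: nonlinear map} is a polynomial (hence continuous) map and $\Pi_n(u_h^n)=0$ is equivalent to \eqref{def:discrete scheme}, existence reduces to this coercivity-type inequality, while uniqueness will follow from a separate monotonicity estimate. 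In both steps the decisive quantity is the diagonal kernel, for which \eqref{the defination of a} gives the closed form $a_0^{(n)}=\tau_n^{-\alpha}/\Gamma(2-\alpha)$, so that $\tau_n\le\tau$ yields the lower bound $a_0^{(n)}\ge\tau^{-\alpha}/\Gamma(2-\alpha)$.

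For existence I would test $\Pi_n(w_h)$ against $w_h$ and expand
\[
\myinner{\Pi_n(w_h),w_h}=a_0^{(n)}\|w_h\|^2-\myinner{g_h^{n-1},w_h}+\myinner{(1+\Delta_h)^2w_h,w_h}+\myinner{f(w_h),w_h}.
\]
Using the discrete Green's formulas $\myinner{-\Delta_hv,w}=\myinner{\nabla_hv,\nabla_hw}$ and $\myinner{\Delta_h^2v,w}=\myinner{\Delta_hv,\Delta_hw}$, the fourth-order term collapses to $\|(1+\Delta_h)w_h\|^2\ge 0$ and may be dropped. The only indefinite contribution is $\myinner{f(w_h),w_h}=\|w_h\|_{l^4}^4-\mathrm{g}\,h^2\sum w_h^3-\epsilon\|w_h\|^2$, where $f(u)=u^3-\mathrm{g}u^2-\epsilon u$; applying Young's inequality to the cubic term $\mathrm{g}|w_h|^3$ against the quartic term leaves a bound of the form $\myinner{f(w_h),w_h}\ge -\tfrac13(4\mathrm{g}^2+3\epsilon)\|w_h\|^2$ plus a nonnegative multiple of $\|w_h\|_{l^4}^4$. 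Combining these with Cauchy--Schwarz on the data term gives
\[
\myinner{\Pi_n(w_h),w_h}\ge\Big(a_0^{(n)}-\tfrac13(4\mathrm{g}^2+3\epsilon)\Big)\|w_h\|^2-\|g_h^{n-1}\|\,\|w_h\|.
\]
Condition \eqref{def:the condition of tau} is precisely what forces $a_0^{(n)}\ge\tfrac13(4\mathrm{g}^2+3\epsilon)$, so the bracketed coefficient is nonnegative; retaining the super-quadratic quartic remainder then guarantees $\myinner{\Pi_n(w_h),w_h}>0$ once $\|w_h\|=\gamma$ is taken large enough, and Brouwer delivers a root $u_h^n$.

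Uniqueness I would obtain by assuming two solutions $w_1,w_2$ of $\Pi_n(\cdot)=0$, subtracting, and testing with $v:=w_1-w_2$, which cancels the data term $g_h^{n-1}$ and yields $0=a_0^{(n)}\|v\|^2+\|(1+\Delta_h)v\|^2+\myinner{f(w_1)-f(w_2),v}$. The key algebraic identity is the pointwise factorization $(f(a)-f(b))(a-b)=(a-b)^2\big[a^2+ab+b^2-\mathrm{g}(a+b)-\epsilon\big]$, whose bracket I would bound below by $-\tfrac13(4\mathrm{g}^2+3\epsilon)$ after a completion-of-square estimate of the indefinite part $-\mathrm{g}(a+b)$. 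Discarding $\|(1+\Delta_h)v\|^2\ge 0$ and invoking \eqref{def:the condition of tau} once more forces $\big(a_0^{(n)}-\tfrac13(4\mathrm{g}^2+3\epsilon)\big)\|v\|^2\le 0$, hence $v\equiv 0$.

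The main obstacle is the non-monotonicity of the bulk force $f$: the $-\mathrm{g}u^2$ and $-\epsilon u$ pieces destroy both the coercivity and the strict monotonicity of $\Pi_n$, so neither existence nor uniqueness is automatic. The whole argument rests on trading these indefinite contributions against the diagonal kernel $a_0^{(n)}$, and the delicate point is the quantitative bookkeeping: the Young/completion-of-square constant that emerges must be matched exactly by the step-size threshold \eqref{def:the condition of tau}, which is what pins down the clean constant $\tfrac13(4\mathrm{g}^2+3\epsilon)$ in place of a looser one.
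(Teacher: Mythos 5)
Your proposal is correct and follows essentially the same route as the paper: existence via testing $\Pi_n(w)$ against $w$, discarding the nonnegative biharmonic term, absorbing the cubic term into the quartic and quadratic ones by Young's inequality with exactly the constant $\tfrac{4\mathrm{g}^2}{3}+\epsilon$ matched by the step-size restriction, and then invoking Brouwer on a suitable sphere; uniqueness via subtracting two solutions and bounding the factor $w^2+wv+v^2-\mathrm{g}(w+v)-\epsilon$ from below by completing the square (the paper obtains the slightly sharper constant $-\mathrm{g}^2-\epsilon$ there, but both bounds are covered by the same restriction \eqref{def:the condition of tau}). No substantive differences.
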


\begin{proof} The existence of the solution will be shown firstly.
	Suppose that $u^{n-1}, u^{n-2},\cdots, u^{1}$ have been determined, taking the inner product of $\Pi_n(w)$ with $w$ yields
	\begin{align*}
		\myinner{\Pi_n(w), w}=a_0^{(n)}\myinner{w, w}+\myinner{(1+\Delta_h)^2w, w}
		+ \myinner{f(w), w}-\myinner{g^{n-1}, w},\quad\text{for}~n\ge1.
	\end{align*}
	For the second term on the right hand side, the discrete Green's formula implies that
	\begin{align*}
		\myinner{(1+\Delta_h)^2w, w}=\myinner{(1+\Delta_h)w, (1+\Delta_h)w}\ge 0.
	\end{align*}
	Then, in virtue of Cauchy-Schwarz inequality and the time step restriction, one gets
	\begin{align*}
		\myinner{\Pi_n(w), w}\geq&a_0^{(n)}\|w\|^2+\|w\|_4^4-\mathrm{g}\myinner{w^2,w}-\epsilon\|w\|^2
		-\|g^{n-1}\|\cdot\|w\|\\
		\geq&\big(a_0^{(n)}-\epsilon\big)\|w\|^2+\|w\|_4^4-\big(\frac{\mathrm{4g^2}}{3}\|w\|^2+\frac{3}{16}\|w\|_4^4\big)-\|g^{n-1}\|\cdot\|w\|\\
		\ge&\big(a_0^{(n)}-\epsilon-\frac{4\mathrm{g}^2}{3}\big)\|w\|^2-\|g^{n-1}\|\cdot\|w\|=0,
	\end{align*}  
	provided that $\|w\|=\frac{1}{a_0^{(n)}-\epsilon-4\mathrm{g}^2/3}\|g^{n-1}\|$. Hence there exists a $w^*$ such that $\Pi_n(w^*)=0$ according to the Brouwer fixed-point theorem, which implies the numerical scheme \eqref{def:discrete scheme} is solvable.
	
	Then, we are going to prove the uniqueness of the discrete solution. Suppose $w_h$ and $v_h$ are two solutions of the numerical scheme \eqref{def:discrete scheme}. Denote $\zeta_h=w_h-v_h$, then the following equation holds for $\zeta_h$,
	\begin{align}
		a_0^{(n)}\zeta_h+(1+\Delta_h)^2\zeta_h+f(w_h)-f(v_h)=0.\label{eq: Error nonlinear map}
	\end{align}
	Taking the inner product of \eqref{eq: Error nonlinear map} with $\zeta$, one has
	\begin{align}
		a_0^{(n)}\|\zeta\|^2+\|(1+\Delta_h)\zeta\|^2+\myinner{f(w)-f(v), \zeta}=0.\label{3.16}
	\end{align}
	For the nonlinear term in \eqref{3.16}, the further estimation gives
	\begin{align*}
		\myinner{f(w)-f(v), \zeta}&=\myinner{w^3-v^3, \zeta}-\mathrm{g}\myinner{w^2-v^2, \zeta}-\epsilon\|\zeta\|^2\\
		&=\myinner{w^2+wv+v^2-\mathrm{g}w-\mathrm{g}v, \zeta^2}-\epsilon\|\zeta\|^2\\
		&\ge \frac{1}{2}\myinner{(w-\mathrm{g})^2+(v-\mathrm{g})^2, \zeta^2}-\mathrm{g}^2\|\zeta\|^2-\epsilon\|\zeta\|^2.
	\end{align*}
	Substituting the above inequality into \eqref{3.16} yields
	$$(a_0^{(n)}-\mathrm{g}^2-\epsilon)\|\zeta\|^2\leq0.$$
	Thus $\|\zeta\|=0$ due to the time step restriction,
	which implies that the variable-step L1 scheme \eqref{def:discrete scheme} has a unique solution.	
\end{proof}

\section{Discrete energy dissipation law}
In this section, the energy dissipation property of the variable-step L1 scheme \eqref{def:discrete scheme} will be demonstrated. To this end, we use the novel tool proposed in \cite{liaoyanyg,liaozhuwangnmtma} which provides a framework for energy stability and convergence analysis of the time-stepping scheme for the time fractional phase field models.

The important ingredient of this analysis technique is the discrete complementary convolution (DCC) kernels, which is generated by the following recursive procedure
\begin{align*}
	p^{(n)}_0:=\frac{1}{a_{0}^{(n)}}\quad \mathrm{and} \quad p_{n-k}^{(n)}:=\frac{1}{a_{0}^{(k)}}
	\sum_{j=k+1}^n(a_{j-k-1}^{(j)}-a_{j-k}^{(j)})p_{n-j}^{(n)}
	\quad \text{for $1\le k\le n-1$}.
\end{align*}
The DCC kernels are proven to be complementary to the discrete L1 kernels $a^{(n)}_{n-k}$, namely
\begin{align*}
	\sum_{j=k}^np_{n-j}^{(n)}a_{j-k}^{(j)}\equiv 1\quad\text{for $1\le k\le n$}.
\end{align*}

The following lemma reveals the discrete gradient structure of the L1 formula \eqref{def:L1 formula} which is essential for the construction of the discrete energy dissipation law of the variable-step L1 scheme \eqref{def:discrete scheme}.
\begin{lemma}{\rm\cite{YangJCP2022}}\label{lem:inequation of an}
	For any real sequence $\{v_k\}_{k=1}^n$, it holds that
	\begin{align*}
		2v_n\sum_{j=1}^na_{n-j}^nv_j\ge a_0^{(n)}v_n^2+\sum_{k=1}^np_{n-k}^{(n)}\left(\sum_{j=1}^ka_{k-j}^{(k)}v_j \right)^2-\sum_{k=1}^{n-1}p_{n-1-k}^{(n-1)}\left(\sum_{j=1}^ka_{k-j}^{(k)}v_j \right)^2.
	\end{align*}
\end{lemma}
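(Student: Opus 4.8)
The plan is to recast the claimed inequality as a telescoping (discrete gradient) identity and then, by completing a square, reduce it to a pure inequality among the kernels. Throughout, I would write $G_k:=\sum_{j=1}^k a_{k-j}^{(k)}v_j$ for the discrete convolution at level $k$ and set $\mathcal{P}_m:=\sum_{k=1}^m p_{m-k}^{(m)}G_k^2$. Since $p_0^{(m)}=1/a_0^{(m)}$, the right-hand side of the assertion is exactly $a_0^{(n)}v_n^2+\mathcal{P}_n-\mathcal{P}_{n-1}$, so that summing the inequality over $n$ produces the desired telescoping of $\mathcal{P}_n$; this is the structural reason the estimate is useful for the energy analysis in Section 3.

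First I would isolate the top level. Writing $G_n=a_0^{(n)}v_n+H_n$ with $H_n:=\sum_{k=1}^{n-1}a_{n-k}^{(n)}v_k$, the identity
\[ 2v_nG_n-a_0^{(n)}v_n^2-\tfrac{1}{a_0^{(n)}}G_n^2=-\tfrac{1}{a_0^{(n)}}H_n^2 \]
follows from completing the square, since $v_n-G_n/a_0^{(n)}=-H_n/a_0^{(n)}$. Substituting $\mathcal{P}_n=\tfrac{1}{a_0^{(n)}}G_n^2+\sum_{k=1}^{n-1}p_{n-k}^{(n)}G_k^2$ then collapses the whole statement to the single kernel inequality
\[ \sum_{k=1}^{n-1}\Big(p_{n-1-k}^{(n-1)}-p_{n-k}^{(n)}\Big)G_k^2\;\ge\;\frac{1}{a_0^{(n)}}H_n^2. \]

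Two monotonicity properties of the L1 kernels should drive the remaining argument. The within-level monotonicity \eqref{monotonic} guarantees, through the defining recursion, that every DCC kernel $p_{m-k}^{(m)}$ is positive; and since $\omega_{1-\alpha}$ is strictly decreasing, for $1\le k\le n-1$ one has the cross-level bound $a_{n-k}^{(n)}<a_{(n-1)-k}^{(n-1)}$, because the integrand $\omega_{1-\alpha}(t_n-s)$ lies below $\omega_{1-\alpha}(t_{n-1}-s)$ on $(t_{k-1},t_k)$. Together these give $p_{n-1-k}^{(n-1)}-p_{n-k}^{(n)}\ge0$, so the coefficients on the left are admissible weights. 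In the base case $n=2$ the reduced inequality is immediate: $H_2=a_1^{(2)}v_1=(a_1^{(2)}/a_0^{(1)})G_1$ while $p_0^{(1)}-p_1^{(2)}=a_1^{(2)}/(a_0^{(1)}a_0^{(2)})$, so it amounts to $a_1^{(2)}\le a_0^{(1)}$, which is exactly the cross-level monotonicity.

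The main obstacle is the general step, where $H_n$ mixes $v_1,\dots,v_{n-1}$ whereas the right-hand weights are attached to the convolutions $G_1,\dots,G_{n-1}$; controlling this cross structure is the heart of the matter, and positivity of the coefficients alone does not suffice. I would handle it by induction on $n$, using the recursion for the DCC kernels to express $p_{n-1-k}^{(n-1)}-p_{n-k}^{(n)}$ in terms of the lower-index differences and thereby reduce the residual quadratic form at level $n$ to one already controlled at level $n-1$; equivalently, this is the positive semidefiniteness of the bilinear form generated by the DCC kernels, which is the abstract mechanism behind the discrete gradient framework of \cite{liaoyanyg,liaozhuwangnmtma}. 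Checking that the two monotonicity properties above are precisely the hypotheses of that framework, and carrying the induction cleanly through the cross terms, is the step I expect to demand the most care.
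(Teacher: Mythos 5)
First, note that the paper does not prove this lemma at all: it is quoted verbatim from \cite{YangJCP2022}, so there is no in-paper argument to compare against. Judged on its own terms, your reduction is correct and is indeed the standard opening move: writing $G_k=\sum_{j=1}^k a_{k-j}^{(k)}v_j$, $H_n=G_n-a_0^{(n)}v_n$, the completing-the-square identity is right, and the lemma is equivalent to
\begin{align*}
\sum_{k=1}^{n-1}q_k\,G_k^2\;\ge\;\frac{1}{a_0^{(n)}}H_n^2,
\qquad q_k:=p_{n-1-k}^{(n-1)}-p_{n-k}^{(n)} .
\end{align*}
Your base case $n=2$ is also verified correctly. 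But everything after that is a plan rather than a proof, and it contains two genuine gaps.

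First, the sign claim $q_k\ge 0$ does not follow from the positivity of the DCC kernels together with the cross-level monotonicity $a_{n-k}^{(n)}<a_{n-1-k}^{(n-1)}$. Subtracting the complementary identities at levels $n$ and $n-1$ gives the triangular system $\sum_{j=k}^{n-1}q_j a_{j-k}^{(j)}=a_{n-k}^{(n)}/a_0^{(n)}$ for $1\le k\le n-1$; a positive right-hand side in a triangular system with positive off-diagonal entries does not force a positive solution. Already for $n=3$, $k=1$ one computes that $q_1\ge 0$ is equivalent to $a_2^{(3)}a_0^{(2)}\ge a_1^{(2)}a_1^{(3)}$, a ratio (log-convexity across levels) property of the L1 kernels that is true because $s\mapsto\omega_{1-\alpha}(t_3-s)/\omega_{1-\alpha}(t_2-s)$ is decreasing, but which is strictly stronger than the termwise monotonicity you invoke. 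Second, even granting $q_k\ge0$, the reduced inequality is left entirely to an unspecified induction; positivity of the weights indeed "does not suffice", as you say, but you never supply what does. The missing ingredient is the identity $H_n=a_0^{(n)}\sum_{k=1}^{n-1}q_kG_k$, which follows by interchanging sums in the relation $\sum_{j=k}^{n-1}q_j a_{j-k}^{(j)}=a_{n-k}^{(n)}/a_0^{(n)}$ above; a weighted Cauchy--Schwarz inequality then gives $H_n^2\le (a_0^{(n)})^2\bigl(\sum_k q_k\bigr)\bigl(\sum_k q_kG_k^2\bigr)$, and the reduced inequality follows once one also proves $\sum_{k=1}^{n-1}q_k\le 1/a_0^{(n)}$, i.e.\ that $\sum_{j=1}^{n}p_{n-j}^{(n)}$ is nondecreasing in $n$ --- yet another kernel property you would have to establish. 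Until the sign of $q_k$, the representation of $H_n$, and the bound on $\sum_k q_k$ are actually proved, the argument is incomplete at its central step.
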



\begin{lemma}{\rm\cite{sunhong2022}}\label{lem:nonlinear-inequality}
	For any $a,b\in \mathbb{R},$ the following inequality holds
	$$(a^3-\mathrm{g}a^2)(a-b){ \ge} \frac{1}{4}(a^4-b^4)-\frac{\mathrm{g}}{3}(a^3-b^3)-\frac{2\mathrm{g}^2}{3}(a-b)^2.$$
\end{lemma}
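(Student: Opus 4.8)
The plan is to read the inequality as a \emph{semiconvexity} (tangent-line) estimate for the scalar potential $P(u):=\frac14 u^4-\frac{\mathrm{g}}{3}u^3$. Observe that the left-hand side is precisely $P'(a)(a-b)$, since $P'(u)=u^3-\mathrm{g}u^2$, while the first two terms on the right-hand side are exactly $P(a)-P(b)$. Thus the claim is equivalent to the tangent-line inequality with a quadratic defect,
$$P'(a)(a-b)\ge P(a)-P(b)-\tfrac{2\mathrm{g}^2}{3}(a-b)^2.$$
For a convex $P$ the defect term would be superfluous; here $P''(u)=3u^2-2\mathrm{g}u$ changes sign, so I would first restore convexity by adding a quadratic correction.

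Concretely, I would set $G(u):=P(u)+\frac{\mathrm{g}^2}{6}u^2$ and compute
$$G''(u)=3u^2-2\mathrm{g}u+\tfrac{\mathrm{g}^2}{3}=3\bigl(u-\tfrac{\mathrm{g}}{3}\bigr)^2\ge0,$$
so that $G$ is convex on $\mathbb{R}$. The tangent-line inequality for a convex differentiable function, $G(b)\ge G(a)+G'(a)(b-a)$, rearranges to $G'(a)(a-b)\ge G(a)-G(b)$. Substituting $G'(a)=P'(a)+\frac{\mathrm{g}^2}{3}a$ and $G(a)-G(b)=P(a)-P(b)+\frac{\mathrm{g}^2}{6}(a^2-b^2)$, and using the bookkeeping identity $\frac{\mathrm{g}^2}{6}(a^2-b^2)-\frac{\mathrm{g}^2}{3}a(a-b)=-\frac{\mathrm{g}^2}{6}(a-b)^2$, I would obtain
$$P'(a)(a-b)\ge P(a)-P(b)-\tfrac{\mathrm{g}^2}{6}(a-b)^2.$$
Since $\frac{\mathrm{g}^2}{6}\le\frac{2\mathrm{g}^2}{3}$, the stated inequality follows a fortiori, in fact with a sharper constant than advertised.

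The only delicate choice is the amount of quadratic to add: the completed square $3(u-\frac{\mathrm{g}}{3})^2$ pins it to $\frac{\mathrm{g}^2}{6}$, comfortably below the generous constant $\frac{2\mathrm{g}^2}{3}$ appearing in the statement, so there is ample slack. As a fully elementary alternative that sidesteps convexity, I would transpose everything to one side, factor out the common $(a-b)^2$, and reduce the claim to the nonnegativity of the residual quadratic in $\mathrm{g}$,
$$\tfrac{2}{3}\mathrm{g}^2-\tfrac13(2a+b)\mathrm{g}+\tfrac14\bigl((a+b)^2+2a^2\bigr)\ge0.$$
Its discriminant equals $-\frac19(14a^2+8ab+5b^2)$, which is nonpositive because $14a^2+8ab+5b^2$ is positive definite ($8^2-4\cdot14\cdot5<0$); together with the positive leading coefficient $\frac23$ this yields nonnegativity. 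Either route is routine once the structure is seen, so I expect the main obstacle to be purely conceptual: recognizing the potential $P$ and the correct convexity shift that absorbs the sign change of $P''$.
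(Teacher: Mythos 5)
Your proposal is correct, and in fact the paper offers nothing to compare it against: Lemma \ref{lem:nonlinear-inequality} is imported from the reference \cite{sunhong2022} and no proof is reproduced in this paper, so your argument is a genuine, self-contained addition. Both of your routes check out. In the semiconvexity route, $G''(u)=3u^2-2\mathrm{g}u+\frac{\mathrm{g}^2}{3}=3(u-\frac{\mathrm{g}}{3})^2\ge 0$ is exactly right, the tangent-line rearrangement is clean, and the bookkeeping identity $\frac{\mathrm{g}^2}{6}(a^2-b^2)-\frac{\mathrm{g}^2}{3}a(a-b)=-\frac{\mathrm{g}^2}{6}(a-b)^2$ is verified directly, so you really do obtain the stronger defect constant $\frac{\mathrm{g}^2}{6}$ (which is the optimal uniform semiconvexity constant, since $\max_u(-P''(u))=\frac{\mathrm{g}^2}{3}$); the stated $\frac{2\mathrm{g}^2}{3}$ then follows a fortiori. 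In the elementary route, the factorizations $a^4-b^4=(a-b)(a^3+a^2b+ab^2+b^3)$ and $a^3-b^3=(a-b)(a^2+ab+b^2)$ do yield the common factor $(a-b)^2$ with residual $\frac{2}{3}\mathrm{g}^2-\frac{2a+b}{3}\mathrm{g}+\frac{1}{4}\bigl((a+b)^2+2a^2\bigr)$, whose discriminant is $-\frac{1}{9}(14a^2+8ab+5b^2)\le 0$ as you computed, so the quadratic in $\mathrm{g}$ is nonnegative. The only caveat worth recording is that the second route implicitly treats $\mathrm{g}$ as a free real parameter, which is harmless (the paper assumes $\mathrm{g}\ge 0$, a special case), and that the sharpened constant, while correct, is not needed anywhere downstream since Theorem \ref{thm:the energy dissapation} only uses the lemma as stated.
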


Let $E[u^n]$ be the discrete version of energy functional \eqref{def:continuous energy}, i.e.,
$$E[u^n]:=\frac{1}{2}\mynormb{(1+\Delta_h)u^n}^2
+\frac{1}{4}\|u^n\|_4^4-\frac{\mathrm{g}}{3}\myinner{(u^n)^2, u^n}-\frac{\epsilon}{2}\|u^n\|^2.$$
Denote the modified discrete energy
\begin{align*}
	\mathcal{E}[u^0]:=E[u^0],~~\text{and}~~\mathcal{E}[u^n]:=E[u^n]+\frac{1}{2}\sum_{j=1}^{n}p_{n-j}^{(n)}\|\mu^j\|^2,\quad\text{for}~n\ge 1.
\end{align*}

Comparing the modified discrete energy with the continuous counterpart in \eqref{TF energy}, we could regard the DCC kernels $p_{n-j}^{(n)}$ as the discrete kernels of the Riemann-Liouville integral. The discrete energy dissipation law with respect to the above variational discrete energy is shown in the following theorem, which indicates that the variable-step L1 scheme is energy stable.

\begin{theorem} \label{thm:the energy dissapation}
	Under the time step restriction \eqref{def:the condition of tau}, the variable-step L1 scheme \eqref{def:discrete scheme} preserves the variational energy dissipation law at each time level,
$$\partial_\tau\mathcal{E}[u^n]\le 0, \quad \text{for}~ 1\le n\le N.$$
\end{theorem}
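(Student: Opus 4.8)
The plan is to prove the equivalent estimate $\mathcal{E}[u^n]\le\mathcal{E}[u^{n-1}]$, which is exactly the claim since $\tau_n>0$. The backbone is a discrete analogue of the variational identity $\frac{\mathrm{d}}{\mathrm{d}t}E=\myinner{\mu,\partial_tu}$. First I would bound the one-step increment of the plain discrete energy by
\begin{align*}
E[u^n]-E[u^{n-1}]\le \myinner{\mu^n,\diff u^n}-\tfrac12\mynormb{(1+\Delta_h)\diff u^n}^2+\braB{\tfrac{2\mathrm{g}^2}{3}+\tfrac{\epsilon}{2}}\mynormt{\diff u^n}^2.
\end{align*}
The three pieces of $E$ are treated separately. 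For the quadratic term $\frac12\mynormt{(1+\Delta_h)u^n}^2$ I use the elementary identity $\frac12\mynormt{a}^2-\frac12\mynormt{b}^2=\myinner{a,a-b}-\frac12\mynormt{a-b}^2$ together with the discrete Green's formula $\myinner{(1+\Delta_h)^2v,w}=\myinner{(1+\Delta_h)v,(1+\Delta_h)w}$; the same identity applied to $-\frac{\epsilon}{2}\mynormt{u^n}^2$ produces the $\frac{\epsilon}{2}\mynormt{\diff u^n}^2$ remainder; and the quartic--cubic part is controlled by summing Lemma \ref{lem:nonlinear-inequality} over the grid with $a=u^n_h$, $b=u^{n-1}_h$, which is exactly where the defect $\frac{2\mathrm{g}^2}{3}\mynormt{\diff u^n}^2$ appears. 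Collecting the contributions that are linear in $\diff u^n$ reassembles $(1+\Delta_h)^2u^n+f(u^n)=\mu^n$.

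Next I would invoke the scheme to turn $\myinner{\mu^n,\diff u^n}$ into a convolution and feed it to the gradient structure of Lemma \ref{lem:inequation of an}. Since $\mu^n=-D_\tau^\alpha u^n=-\sum_{k=1}^na_{n-k}^{(n)}\diff u^k$, I apply Lemma \ref{lem:inequation of an} pointwise at each node with $v_k=\diff u^k(\mathrm{x}_h)$, multiply by $h^2$ and sum. The crucial observation is that the inner partial sums satisfy $\sum_{j=1}^ka_{k-j}^{(k)}\diff u^j=D_\tau^\alpha u^k=-\mu^k$, so the squared sums in the lemma become precisely $\mynormt{\mu^k}^2$, giving
\begin{align*}
\myinner{\mu^n,\diff u^n}\le -\tfrac12 a_0^{(n)}\mynormt{\diff u^n}^2-\tfrac12\sum_{k=1}^np_{n-k}^{(n)}\mynormt{\mu^k}^2+\tfrac12\sum_{k=1}^{n-1}p_{n-1-k}^{(n-1)}\mynormt{\mu^k}^2.
\end{align*}

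The two DCC-kernel sums are exactly the negatives of the correction terms that define $\mathcal{E}[u^n]-\mathcal{E}[u^{n-1}]$, so adding the modified-energy correction $\frac12\sum_j p_{n-j}^{(n)}\mynormt{\mu^j}^2$ makes them telescope away, leaving
\begin{align*}
\mathcal{E}[u^n]-\mathcal{E}[u^{n-1}]\le -\tfrac12\mynormb{(1+\Delta_h)\diff u^n}^2+\braB{\tfrac{2\mathrm{g}^2}{3}+\tfrac{\epsilon}{2}-\tfrac12 a_0^{(n)}}\mynormt{\diff u^n}^2.
\end{align*}
The first term is nonpositive and is discarded, so it remains only to verify $a_0^{(n)}\ge\frac{4\mathrm{g}^2}{3}+\epsilon$. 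A direct evaluation of \eqref{the defination of a} gives $a_0^{(n)}=1/\brab{\Gamma(2-\alpha)\tau_n^\alpha}\ge 1/\brab{\Gamma(2-\alpha)\tau^\alpha}$, and the time-step restriction \eqref{def:the condition of tau} is calibrated so that this lower bound equals $\frac{4\mathrm{g}^2+3\epsilon}{3}$, which closes the argument.

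I expect the main obstacle to be the \emph{vectorization} step: Lemma \ref{lem:inequation of an} is stated for scalar sequences, so I must justify applying it node by node and summing, and then correctly match the telescoping blocks $p_{n-k}^{(n)}\mynormt{\mu^k}^2$ with the DCC correction in $\mathcal{E}$. The remainder is bookkeeping, the point being that the three leftover $\mynormt{\diff u^n}^2$ contributions (from the potential via Lemma \ref{lem:nonlinear-inequality}, from the $\epsilon$-term, and from $-\frac12 a_0^{(n)}$) are absorbed precisely under the stated mesh condition.
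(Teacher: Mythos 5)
Your proposal is correct and follows essentially the same route as the paper's own proof: both arguments rest on testing the scheme with $\diff u^n$ (equivalently, bounding the increment $E[u^n]-E[u^{n-1}]$ by $\myinner{\mu^n,\diff u^n}$ plus remainders), invoking Lemma \ref{lem:inequation of an} with $v_k=\diff u^k$ so that the partial sums become $-\mu^k$ and the DCC terms telescope against the correction in $\mathcal{E}$, handling the potential via Lemma \ref{lem:nonlinear-inequality}, and absorbing the leftover $\|\diff u^n\|^2$ contributions using $a_0^{(n)}=\tau_n^{-\alpha}/\Gamma(2-\alpha)$ under \eqref{def:the condition of tau}. The only difference is the order of presentation, and the nodewise application of Lemma \ref{lem:inequation of an} you flag as a possible obstacle is exactly how the paper proceeds as well.
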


\begin{proof}
Taking the inner product of \eqref{def:discrete scheme} with $\diff u^n$, we have
\begin{align}\label{Energy-Law-Inner}
	\myinner{D_\tau^{\alpha}u^n, \diff u^n}+\myinner{(1+\Delta_h)^2u^n, \diff  u^n}+\myinner{f(u^n), \diff  u^n}=0,\quad \text{for}~n\ge 1.
\end{align}
According to Lemma \ref{lem:inequation of an}, the first term on the left hand side is handled as follows
\begin{align*}
	\myinner{D_\tau^{\alpha}u^n, \diff u^n} &= \myinner{\sum_{k=1}^{n}a_{n-k}^{(n)}\diff u^k, \diff u^n}\\
	&\ge \frac{1}{2}\sum_{k=1}^{n}p_{n-k}^{(n)}\|\mu^k\|^2-\frac{1}{2}\sum_{k=1}^{n-1}p_{n-1-k}^{(n-1)}\|\mu^k\|^2+\frac{a_0^{(n)}}{2}\|\diff u^n\|^2.
\end{align*}
For the second term, using the summation by parts and the identity $2a(a-b)=a^2-b^2+(a-b)^2,$ we have
\begin{align*}
		\myinner{(1+\Delta_h)^2u^n,\diff  u^n} &=\myinner{(1+\Delta_h)u^n,\diff  (1+\Delta_h)u^n}\\
		&=\frac{1}{2}\braB{\|(1+\Delta_h)u^n\|^2-\|(1+\Delta_h)u^{n-1}\|^2+\|\diff((1+\Delta_h) u^n)\|^2}.
\end{align*}
For the nonlinear term, with the help of Lemma \ref{lem:nonlinear-inequality} and the identity $2a(a-b)=a^2-b^2+(a-b)^2$, it yields that
\begin{align*}
	\myinner{f(u^n), \diff  u^n}&=\myinner{(u^n)^3-\mathrm{g}(u^n)^2, \diff u^n}-\epsilon\myinner{u^n, \diff u^n}\\
	&\geq\frac{1}{4}(\|u^n\|_4^4-\|u^{n-1}\|_4^4)-\frac{\mathrm{g}}{3}\Big(\myinner{(u^n)^2, u^n}-\myinner{(u^{n-1})^2, u^{n-1}}\Big)\\
	&\quad-\frac{\epsilon}{2}(\|u^n\|^2-\|u^{n-1}\|^2)-\Big(\frac{2\mathrm{g}^2}{3}+\frac{\epsilon}{2}\Big)\|\nabla_\tau u^n\|^2.
\end{align*}
Substituting all these into \eqref{Energy-Law-Inner}, the following inequality is valid for $1\le n\le N$,
\begin{align*}
	\left( \frac{a_0^{(n)}}{2}-\frac{2\mathrm{g}^2}{3}-\frac{\epsilon}{2}\right) \|\diff  u^n\|^2
	+{ \mathcal{E}[u^n]}-{ \mathcal{E}[u^{n-1}]}\le0 .
\end{align*}
Thus, the claimed inequality follows immediately by noticing the time step restriction \eqref{def:the condition of tau}.
\end{proof}

\begin{remark}
	The variable-step backward Euler scheme for the classical SH model is given as
	\begin{align*}
		\partial_\tau u^n_h=-\mu^n_h~~\text{with} ~~~\mu^n_h:=(1+\Delta_h)^2u^n_h+f(u^n_h),\quad\mathrm{x}_h\in\Omega_h\;,1\le n\le N,
	\end{align*}
	which is easily verified to satisfy the following energy dissipation law under the time step restriction $\tau\le3/(4\mathrm{g}^2+3\epsilon)$,
	\begin{align*}
		\partial_\tau E[u^n]+\dfrac{{1}}{2}\lVert \mu^n \rVert^2\le0,\quad\text{for}~1\le n \le N.
	\end{align*}
	Noting that when $\alpha\to1^{-}$, the DCC kernels $p^{(n)}_{n-j}\to \tau_j$ and thus we have 
	\begin{align*}
		\partial_\tau\mathcal{E}[u^n]\le0\quad \to \quad \partial_\tau E[u^n]+\dfrac{1}{2}\lVert \mu^n \rVert^2\le0,\quad \alpha\to1^-,
	\end{align*}
	which indicates that the fractional-type discrete energy dissipation law in Theorem \ref{thm:the energy dissapation} is asymptotically compatible with the integer counterpart.
\end{remark}

\begin{lemma}{\rm\cite{sunhong2022}}\label{embedding-equation}
	For any grid function $v\in\mathcal{V}_h,$ it holds that
	\begin{align}
	\|v\|_\infty&\le \tilde{C}_\Omega\brab{\mynorm{v}+\mynorm{(1+\Delta_h)v}},
	\end{align}
	where $\tilde{C}_\Omega$ is a positive constant depending on the size of space domain $\Omega$ but independent of the grid size.
\end{lemma}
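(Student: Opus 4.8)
The plan is to establish the discrete counterpart of the two-dimensional Sobolev embedding $H^2(\Omega)\hookrightarrow L^\infty(\Omega)$ by means of discrete Fourier analysis on the uniform periodic grid $\bar{\Omega}_h$. Before invoking Fourier tools, I would first reduce the claim to a cleaner estimate involving only the sign-definite operator $-\Delta_h$. Since $\Delta_h v=(1+\Delta_h)v-v$, the triangle inequality gives $\mynorm{\Delta_h v}\le\mynorm{(1+\Delta_h)v}+\mynorm{v}$, so it suffices to prove $\mynorm{v}_\infty\le C\brab{\mynorm{v}+\mynorm{\Delta_h v}}$, and the stated bound then follows with $\tilde C_\Omega$ absorbing the constant. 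This reduction is worthwhile because it removes the potential cancellation hidden in $(1+\Delta_h)$ (indeed $\mynorm{(1+\Delta_h)v}^2=\mynorm{v}^2-2\mynorm{\nabla_h v}^2+\mynorm{\Delta_h v}^2$), replacing it by the manifestly nonnegative spectrum of $-\Delta_h$.

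Next I would expand $v$ in the discrete Fourier basis $\big\{e^{\mathrm{i}(\lambda_p x+\lambda_q y)}\big\}$ with wavenumbers $\lambda_p=2\pi p/L$, over which $-\Delta_h$ is diagonal with nonnegative eigenvalues $\mu_{pq}=\mu_p+\mu_q$, where $\mu_p=\tfrac{4}{h^2}\sin^2(\pi p/M)$. Using the pointwise synthesis bound $|v(\mathrm{x}_h)|\le\sum_{p,q}|\hat v_{pq}|$ together with the Cauchy-Schwarz inequality against the weight $(1+\mu_{pq})$, I obtain
$$\mynorm{v}_\infty\le\braB{\sum_{p,q}\frac{1}{(1+\mu_{pq})^2}}^{1/2}\braB{\sum_{p,q}(1+\mu_{pq})^2|\hat v_{pq}|^2}^{1/2}.$$
By Parseval's identity (whose normalization constant is a power of $L$, independent of $M$) and the elementary estimate $(1+\mu_{pq})^2\le 2(1+\mu_{pq}^2)$, the second factor is controlled by $\mynorm{v}+\mynorm{\Delta_h v}$ up to a constant depending only on $L$. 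Hence the whole estimate hinges on bounding the first factor uniformly in the grid.

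The main obstacle is therefore to show that the spectral sum $S_M:=\sum_{p,q}(1+\mu_{pq})^{-2}$ stays bounded as $M\to\infty$; this is precisely the borderline nature of the embedding in two dimensions. I would exploit the elementary inequality $\sin\theta\ge\tfrac{2}{\pi}\theta$ on $[0,\pi/2]$, which yields $\mu_p=\tfrac{4}{h^2}\sin^2(\pi p/M)\ge\tfrac{16}{L^2}p^2$ (using $hM=L$), and hence $\mu_{pq}\ge\tfrac{16}{L^2}(p^2+q^2)$ with a constant independent of $M$. Comparing the resulting sum with the corresponding integral then gives
$$S_M\le\sum_{p,q}\frac{1}{\brab{1+\tfrac{16}{L^2}(p^2+q^2)}^{2}}\le C_0+\int_{\mathbb{R}^2}\frac{\zd x\,\zd y}{\brab{1+\tfrac{16}{L^2}(x^2+y^2)}^{2}}=:\tilde C_\Omega^{\,2}<\infty,$$
where the integral converges since, in polar coordinates, its integrand decays like $r^{-3}$. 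Crucially the final bound depends only on the domain size $L$ and not on $M$, as the statement requires. Assembling the three steps — the triangle-inequality reduction, the Fourier diagonalization with Cauchy-Schwarz, and the uniform boundedness of $S_M$ — completes the proof. I expect the integral-comparison argument for $S_M$ to be the delicate point, both because it encodes the dimensional threshold and because the sum-to-integral comparison must be carried out uniformly in the grid parameter $M$.
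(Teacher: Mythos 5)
The paper does not prove this lemma at all: it is quoted from \cite{sunhong2022}, so there is no in-paper argument to compare yours against. Judged on its own merits, your proof is correct and follows the standard route to the discrete two-dimensional $H^2\hookrightarrow L^\infty$ embedding. The triangle-inequality reduction from $(1+\Delta_h)$ to $-\Delta_h$ is valid and sensible, since the symbol $1-\mu_{pq}$ of $1+\Delta_h$ can vanish on individual modes while $1+\mu_{pq}$ cannot; the Fourier diagonalization combined with the weighted Cauchy--Schwarz step is sound; and the uniform bound on $S_M$ is indeed where the real content lies. Two details should be made explicit in a full write-up: (i) the inequality $\sin^2(\pi p/M)\ge 4p^2/M^2$ requires the wavenumbers to be indexed over the symmetric, aliasing-free range $|p|\le M/2$, which is the correct choice for the discrete transform but needs to be stated; and (ii) the Parseval normalization $\mynorm{v}^2=L^2\sum_{p,q}\abs{\hat v_{pq}}^2$ should be recorded so the reader sees that the constant is genuinely independent of $M$. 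With those points filled in, the argument is complete: $S_M$ is dominated by the convergent lattice sum $\sum_{(p,q)\in\mathbb{Z}^2}\brab{1+\tfrac{16}{L^2}(p^2+q^2)}^{-2}$, whose terms decay like $\abs{(p,q)}^{-4}$ and thus comfortably beat the two-dimensional summability threshold, uniformly in the grid.
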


It follows from Theorem \ref{thm:the energy dissapation} that the discrete
energy is bounded by the initial energy, that is
\begin{align*}
	E[u^n]\le\mathcal{E}[u^n]\le \mathcal{E}[u^{n-1}]\le \cdots \le \mathcal{E}[u^{0}]= E[u^0],\quad\text{for}~n\ge 1.
\end{align*}	
Furthermore, the following lemma shows that the discrete solution of numerical scheme \eqref{def:discrete scheme} is bounded in $L^\infty$ norm.

\begin{lemma}\label{lem:Bound-Solution}
	If the time step restriction \eqref{def:the condition of tau} holds, then the numerical solution of the variable-step L1 scheme \eqref{def:discrete scheme} is bounded in $L^\infty$ norm, i.e., 
	$\mynormb{u^n}_{\infty}\le c_0$ for $n\ge 1$,
	where $c_0$ is a mesh-independent constant.
\end{lemma}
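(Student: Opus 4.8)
The plan is to combine the monotone energy bound furnished by Theorem \ref{thm:the energy dissapation} with the discrete Sobolev embedding of Lemma \ref{embedding-equation}. Since $E[u^n]\le\mathcal{E}[u^n]\le\mathcal{E}[u^0]=E[u^0]$ for every $n\ge 1$, and $E[u^0]$ is a fixed, mesh-independent quantity determined solely by the initial data, it suffices to show that the inequality $E[u^n]\le E[u^0]$ forces mesh-independent control of both $\|(1+\Delta_h)u^n\|$ and $\|u^n\|$. The embedding then yields $\|u^n\|_\infty\le\tilde C_\Omega(\|u^n\|+\|(1+\Delta_h)u^n\|)\le c_0$ at once.

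First I would write out the definition of $E[u^n]$ and isolate the two coercive contributions $\tfrac12\|(1+\Delta_h)u^n\|^2$ and $\tfrac14\|u^n\|_4^4$, moving the sign-indefinite cubic term $-\tfrac{\mathrm g}{3}\myinner{(u^n)^2,u^n}$ and the negative quadratic term $-\tfrac{\epsilon}{2}\|u^n\|^2$ to the right-hand side, so that
\[
\tfrac12\|(1+\Delta_h)u^n\|^2+\tfrac14\|u^n\|_4^4\le E[u^0]+\tfrac{\mathrm g}{3}\myinner{(u^n)^2,u^n}+\tfrac{\epsilon}{2}\|u^n\|^2.
\]
The crux is to absorb the two right-hand terms into $\tfrac14\|u^n\|_4^4$. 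I would estimate $\myinner{(u^n)^2,u^n}\le\|u^n\|_3^3$ and then apply the pointwise Young inequalities $|s|^3\le\delta|s|^4+C_\delta$ and $|s|^2\le\delta'|s|^4+C_{\delta'}$, with explicit constants $C_\delta,C_{\delta'}$, summed against $h^2$ over $\Omega_h$. Because $h^2\sum_{\mathrm{x}_h\in\Omega_h}1\le L^2$ is mesh-independent, this produces $\|u^n\|_3^3\le\delta\|u^n\|_4^4+C$ and $\|u^n\|^2\le\delta'\|u^n\|_4^4+C$ with mesh-independent $C$. Choosing $\delta,\delta'$ so that $\tfrac{\mathrm g}{3}\delta+\tfrac{\epsilon}{2}\delta'\le\tfrac18$ then leaves
\[
\tfrac12\|(1+\Delta_h)u^n\|^2+\tfrac18\|u^n\|_4^4\le E[u^0]+C,
\]
which gives mesh-independent bounds on $\|(1+\Delta_h)u^n\|$ and on $\|u^n\|_4$. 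A discrete Hölder estimate on the finite grid, $\|u^n\|\le L^{1/2}\|u^n\|_4$, then bounds $\|u^n\|$ as well, and Lemma \ref{embedding-equation} closes the argument.

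The main obstacle is the sign-indefinite cubic term $-\tfrac{\mathrm g}{3}\myinner{(u^n)^2,u^n}$: it cannot be controlled directly by the energy but must instead be dominated by the quartic term $\tfrac14\|u^n\|_4^4$ through Young's inequality, and one must take care that every absorption constant remains mesh-independent. In the end these constants depend only on $\mathrm g$, $\epsilon$, $\alpha$ (through $E[u^0]$), the domain measure $|\Omega|=L^2$, and $\tilde C_\Omega$, but on neither the spatial size $h$ nor the time grid, so that $c_0$ is genuinely mesh-independent as claimed.
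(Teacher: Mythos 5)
Your proof is correct and follows essentially the same route as the paper: energy monotonicity from Theorem \ref{thm:the energy dissapation}, absorption of the sign-indefinite cubic and negative quadratic terms into the coercive quartic term with mesh-independent constants supplied by the finite domain measure, and then the discrete embedding of Lemma \ref{embedding-equation}. The only cosmetic difference is that the paper uses the algebraic inequality $\big[a^2-\tfrac{9}{7}(1+\epsilon+\mathrm{g}^2)\big]^2\ge0$ to extract a $+2\|u^n\|^2$ term directly from $\|u^n\|_4^4$, whereas you absorb everything into $\|u^n\|_4^4$ via Young's inequality and recover the bound on $\|u^n\|$ afterwards by a discrete H\"older estimate.
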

\begin{proof}
	Utilizing Theorem \ref{thm:the energy dissapation} and Young inequality, it follows from $E[u^n]\le E[u^0]$ that
	\begin{align}
		4{ E[u^0]}\geq& 2\|(1+\Delta_h)u^n\|^2+\|u^n\|_4^4-\frac{4\mathrm{g}}{3}\myinner{(u^n)^2, u^n}-2\epsilon\|u^n\|^2\nonumber\\
		\ge&\, 2\|(1+\Delta_h)u^n\|^2+\|u^n\|_4^4-\frac{2}{9}\mynormb{u^n}_4^4-2\mathrm{g}^2\|u^n\|^2-2\epsilon\|u^n\|^2\nonumber\\
		\ge&\, 2\|(1+\Delta_h)u^n\|^2+\frac{7}{9}\|u^n\|_4^4-2(\mathrm{g}^2+\epsilon)\|u^n\|^2,~~ n\ge1.\label{infinite inequality}
	\end{align}
	For any $a \in \mathbb{R}$, the inequality $\big[a^2-\frac{9}{7}(1+\epsilon+\mathrm{g}^2)\big]^2\ge0$ produces that
	$$\|u^n\|_4^4\ge \frac{18}{7}(1+\epsilon+\mathrm{g}^2)\|u^n\|^2-\frac{81}{49}(1+\epsilon+\mathrm{g}^2)^2|\Omega_h|.$$
	Substituting the above inequality into (\ref{infinite inequality}), we have
	\begin{align*}
		4E[u^0]&\ge 2\|(1+\Delta_h)u^n\|^2+2\|u^n\|^2-\frac{9}{7}(1+\epsilon+\mathrm{g}^2)^2|\Omega_h|\\
		&\ge\big(\|(1+\Delta_h)u^n\|+\|u^n\|\big)^2-\frac{9}{7}(1+\epsilon+\mathrm{g}^2)^2|\Omega_h|.
	\end{align*}
	With the help of Lemma \ref{embedding-equation}, it follows that
	\begin{align*}
		\|u^n\|_\infty&\le \tilde{C}_\Omega(\|(1+\Delta_h)u^n\|+\|u^n\|\big)\\
		&\le \tilde{C}_\Omega\sqrt{4E[u^0]+\frac{9}{7}(1+\epsilon+\mathrm{g}^2)^2|\Omega_h|}\\
		&:=c_0.
	\end{align*}
	This completes the proof.
\end{proof}

\section{$L^2$ norm error estimate}
\setcounter{equation}{0}
In this section, we prove the convergence of the variable-step L1 scheme \eqref{def:discrete scheme}. The time fractional differential equations admit a weak singularity near the initial time \cite{martin2018}. In this paper, it is reasonable to make the following regularity hypothesis for the solution of the TFSH equation \eqref{def:the TFSH equation}: there exists a mesh-independent constant $c_1>0$ such that 
\begin{align}
	\|u\|_\infty\le c_1,\quad\|\partial_{xx}u\|\le c_1,\quad\|\partial_t^{(l)} u\| \le c_1(1+t^{\sigma-l}), \label{regularity}
\end{align}
for $0< t \le T$, $x\in\Omega$ and $l=1,2$, where the regularity parameter $\sigma\in(0,1)\cup(1,2)$.

We adopt a family of nonuniform time meshes which concentrate the time levels near $t=0$ and assume that for a mesh parameter $\gamma\ge 1$ \cite{MclenMustapha}, there exists mesh-independent constant $C_{\gamma}>0$ such that
\begin{align}
	\tau_k\le \tau~\min\{1,C_{\gamma} t_k^{1-1/\gamma}\}~~\text{for}~1\le k\le N ~~\text{and}~~ t_k\le C_{\gamma}t_{k-1}~~\text{for}~2\le k\le N. \label{mesh}
\end{align}

Denote the local consistency error of the variable-step L1 formula $\xi^j={}^C_0D_t^{\alpha}v(t_j)-D_\tau^{\alpha}v^j$. The following lemma estimates the global convolution error with the DCC kernels.
\begin{lemma}{\rm\cite{liaoyanyg}}\label{lem: errors in time}
	For $v\in C^2(0,T]$ and $\int_{0}^{T}t|v_{tt}|\mathrm{d}t<\infty$, the global consistency error of the nonuniform L1 formula \eqref{def:L1 formula} is bounded by
\begin{align*}
	\sum_{j=1}^np_{n-j}^{(n)}|\xi^j|\le c_1\left( \frac{\tau_1^{\sigma}}{\sigma}+\frac{1}{1-\alpha}\max_{2\le k\le n}t_k^{\alpha}t_{k-1}^{\sigma-2}\tau_k^{2-\alpha}\right),\quad \text{for}~1\le n\le N.
\end{align*}	
Moreover, if the mesh satisfies \eqref{mesh}, then 
\begin{align*}
	\sum_{j=1}^np_{n-j}^{(n)}|\xi^j|\le \frac{c_1}{\sigma(1-\alpha)}\tau^{\min\{\gamma\sigma,2-\alpha\}},\quad \text{for}~1\le n\le N.
\end{align*}	
\end{lemma}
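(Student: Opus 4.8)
The plan is to bound the pointwise local error $\xi^j$ subinterval-by-subinterval and then transfer the bound through the DCC weights by means of the complementary identity $\sum_{j=k}^{n}p_{n-j}^{(n)}a_{j-k}^{(j)}\equiv1$. First I would rewrite $\xi^j$ as a sum of quadrature remainders. Because $a_{j-k}^{(j)}$ in \eqref{the defination of a} is precisely the average of $\omega_{1-\alpha}(t_j-\cdot)$ over $(t_{k-1},t_k)$, the L1 formula replaces $v'(s)$ there by the constant slope $(v^k-v^{k-1})/\tau_k$, so that
\[
\xi^j=\sum_{k=1}^{j}\int_{t_{k-1}}^{t_k}\omega_{1-\alpha}(t_j-s)\Bigl(v'(s)-\tfrac{v^k-v^{k-1}}{\tau_k}\Bigr)\zd s=:\sum_{k=1}^{j}G_k^{(j)},
\]
where the integrand is the derivative of the piecewise-linear interpolation remainder $R_k$, which vanishes at both nodes.

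Next I would estimate $G_k^{(j)}$ in two regimes dictated by the weak singularity in \eqref{regularity}. On the first interval only $|v'(s)|\le c_1 s^{\sigma-1}$ is available, which is integrable with $\int_0^{t_1}s^{\sigma-1}\zd s=t_1^\sigma/\sigma$ but unbounded at $s=0$; here I would exploit the zero-mean property $\int_0^{t_1}\bigl(v'(s)-(v^1-v^0)/\tau_1\bigr)\zd s=0$ together with the monotonicity of the kernel to obtain $|G_1^{(j)}|\le Cc_1\,\omega_{1-\alpha}(t_j-t_1)\tau_1^\sigma/\sigma$ for $j\ge2$, treating the singular diagonal $G_1^{(1)}$ separately. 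For $k\ge2$, $v\in C^2[t_{k-1},t_k]$ with $|v''|\le c_1 t_{k-1}^{\sigma-2}$, so $\|R_k\|_\infty\le\tfrac18 c_1\tau_k^2 t_{k-1}^{\sigma-2}$; integrating $G_k^{(j)}$ by parts (the boundary terms drop since $R_k$ vanishes at the nodes) moves the $s$-derivative onto the kernel and yields, for $j>k$, $|G_k^{(j)}|\le\tfrac18 c_1\tau_k^2 t_{k-1}^{\sigma-2}\bigl(\omega_{1-\alpha}(t_j-t_k)-\omega_{1-\alpha}(t_j-t_{k-1})\bigr)$, with the diagonal $G_k^{(k)}$ again estimated directly against $a_0^{(k)}$.

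Then I would multiply by $p_{n-j}^{(n)}\ge0$, sum over $j$, and exchange the order of summation to reduce the left-hand side to $\sum_{k=1}^{n}\sum_{j=k}^{n}p_{n-j}^{(n)}|G_k^{(j)}|$. The main obstacle is controlling the weighted kernel sums: for each fixed $k$ one needs $\sum_{j=k}^{n}p_{n-j}^{(n)}\bigl(\omega_{1-\alpha}(t_j-t_k)-\omega_{1-\alpha}(t_j-t_{k-1})\bigr)$, and the analogous first-interval sum of $p_{n-j}^{(n)}\omega_{1-\alpha}(t_j-t_1)$, to be bounded by a quantity of order $(1-\alpha)^{-1}(t_k/\tau_k)^{\alpha}$, which is exactly what converts $\tau_k^2$ into $t_k^\alpha\tau_k^{2-\alpha}$ while leaving the first-interval contribution at $O(\tau_1^\sigma/\sigma)$. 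This is where the complementary identity, the positivity/monotonicity \eqref{monotonic}, and the sharp DCC-kernel bounds of \cite{liaoyanyg} do the real work, and it is the delicate bookkeeping between the $p$-weights and the local kernel factors. Pulling the maximum over $k$ outside the remaining sum then yields the first asserted estimate $c_1\bigl(\tau_1^\sigma/\sigma+(1-\alpha)^{-1}\max_{2\le k\le n}t_k^\alpha t_{k-1}^{\sigma-2}\tau_k^{2-\alpha}\bigr)$.

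Finally, under the mesh assumption \eqref{mesh} I would substitute the graded-mesh estimates. Since $t_1=\tau_1$, the first inequality in \eqref{mesh} forces $\tau_1\le(C_\gamma\tau)^\gamma$, hence $\tau_1^\sigma\le C\tau^{\gamma\sigma}$. Combining $\tau_k\le C_\gamma\tau\,t_k^{1-1/\gamma}$ with $t_k\le C_\gamma t_{k-1}$ gives $t_k^{\alpha}t_{k-1}^{\sigma-2}\tau_k^{2-\alpha}\le C\tau^{2-\alpha}t_k^{\,\sigma-(2-\alpha)/\gamma}$, whose exponent of $t_k$ dictates where the maximum is attained: bounding $t_k\le T$ when $\gamma\sigma\ge2-\alpha$ and by the smallest mesh point $t_1\sim\tau^\gamma$ otherwise, both contributions merge into $\tfrac{c_1}{\sigma(1-\alpha)}\tau^{\min\{\gamma\sigma,2-\alpha\}}$, as claimed.
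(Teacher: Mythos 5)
First, a caveat: the paper offers no proof of Lemma~4.1 --- it is quoted verbatim from \cite{liaoyanyg} --- so your proposal can only be measured against the argument in that reference and its companions. At the structural level you track it faithfully: the splitting $\xi^j=\sum_{k\le j}G_k^{(j)}$ into per-cell quadrature remainders, the separate treatment of the first cell via $|v'(s)|\le c_1s^{\sigma-1}$ and of the cells $k\ge2$ via $|v''|\le c_1t_{k-1}^{\sigma-2}$, the integration by parts that moves the $s$-derivative onto the kernel (with the correct sign, since $\partial_s\omega_{1-\alpha}(t_j-s)>0$), the exchange of summation against the nonnegative weights $p_{n-j}^{(n)}$, and the final graded-mesh substitution are all the right moves, and the last paragraph's exponent bookkeeping is correct.

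The genuine gap sits exactly where you say ``the real work'' is done: you assert, but do not prove, that $\sum_{j=k}^{n}p_{n-j}^{(n)}\bigl(\omega_{1-\alpha}(t_j-t_k)-\omega_{1-\alpha}(t_j-t_{k-1})\bigr)$ is of order $\frac{1}{1-\alpha}(t_k/\tau_k)^{\alpha}$, and you outsource that bound to ``the sharp DCC-kernel bounds of \cite{liaoyanyg}'' --- i.e., to the very result being established. Worse, the form in which you record the local bounds (differences of the \emph{continuous} kernel at the nodes) does not mesh with the recursive definition of $p_{n-k}^{(n)}$, which is built from the differences $a_{j-k-1}^{(j)}-a_{j-k}^{(j)}$ of the \emph{averaged} kernels; since $a_{j-k}^{(j)}\le\omega_{1-\alpha}(t_j-t_k)\le a_{j-k-1}^{(j)}$, the two differences are not comparable in the direction you need, so the complementary identity cannot be invoked directly on your expression. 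The known proof avoids this by first establishing $|\xi^j|\le a_0^{(j)}G^j+\sum_{k<j}\bigl(a_{j-k-1}^{(j)}-a_{j-k}^{(j)}\bigr)G^k$ with $G^k\lesssim c_1\tau_k^2t_{k-1}^{\sigma-2}$ for $k\ge2$; then the swap of sums collapses \emph{exactly}, because $\sum_{j=k+1}^{n}p_{n-j}^{(n)}\bigl(a_{j-k-1}^{(j)}-a_{j-k}^{(j)}\bigr)=p_{n-k}^{(n)}a_0^{(k)}$ is precisely the recursion defining the DCC kernels, leaving $2\sum_{k}p_{n-k}^{(n)}a_0^{(k)}G^k$; finally $p_{n-k}^{(n)}a_{k-1}^{(k)}\le\sum_{j=1}^{n}p_{n-j}^{(n)}a_{j-1}^{(j)}=1$ together with $a_0^{(k)}/a_{k-1}^{(k)}\le\frac{\Gamma(1-\alpha)}{\Gamma(2-\alpha)}(t_k/\tau_k)^{\alpha}=\frac{1}{1-\alpha}(t_k/\tau_k)^{\alpha}$ produces exactly the factor you need and converts the sum into the stated maximum. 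Without some version of this mechanism your outline does not close. A smaller point: in the regime $\gamma\sigma<2-\alpha$ your last step requires a lower bound $t_1\gtrsim\tau^{\gamma}$, which the stated mesh condition \eqref{mesh} (an upper bound on $\tau_k$) does not by itself supply.
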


We are now in the position to prove the $L^2$ norm convergence of the variable-step L1 scheme \eqref{def:discrete scheme}. Denote the error $e_h^n=U_h^n-u_h^n, { \mathrm{x}}_h\in\bar{\Omega}_h, 0\le n\le N$.
We get the error equations as follows
\begin{align}\label{def:error scheme}
D_\tau^{\alpha}e_h^n+(1+\Delta_h)^2e_h^n+\big(f(U_h^n)-f(u_h^n)\big)=\xi_h^n+\eta_h^n,~~{ \mathrm{x}}_h\in\bar{\Omega}_h,~1\le n\le N, 
\end{align}
with $e_h^0=0, ~{ \mathrm{x}}_h\in \bar{\Omega}_h,$ where $\xi_h^n$ and $\eta_h^n$ denote the local consistency error in time and space, respectively. Let
\begin{align*}
	\tau^*:=\min\left\lbrace \sqrt[\alpha]{\dfrac{3}{\Gamma(2-\alpha)(4\mathrm{g}^2+3\epsilon)}},\dfrac{1}{\sqrt[\alpha]{2\Gamma(2-\alpha)\rho}}\right\rbrace ,
\end{align*}
where $\rho =c_1^2+c_1c_0+c_0^2+\mathrm{g}(c_0+c_1)+\epsilon$.

 \begin{theorem}\label{th3.2}
	Suppose the solution of the TFSH equation \eqref{def:the TFSH equation} satisfies the regularity assumption \eqref{regularity}. If the maximum time step $\tau$ satisfies $\tau\le \tau^*$, then the numerical solution $u^n$ is convergent in $L^2$ norm,
	\begin{align*}
	\|e^n\|\le C_eE_{\alpha}(2\rho t_n^{\alpha}/r^*)\left(  \frac{\tau_1^{\sigma}}{\sigma}+\frac{1}{1-\alpha}\max_{2\le k\le n}t_k^{\alpha}t_{k-1}^{\alpha-2}\tau_k^{2-\alpha}+h^2\right)  \quad\text{for}~1\le n\le N,
	\end{align*}
	where $r^*:=\min_{1\le k\le N}\{1,r_k\}$ is the minimum step ratio, $C_e$ is a positive constant independent of the time steps $\tau_n$ and the space size $h$. $E_{\alpha}(z):=\sum_{k=0}^{\infty}\frac{z^k}{\Gamma(1+k\alpha)}$ is the Mittag-Leffler function.
	
	In particular, when the time mesh satisfies \eqref{mesh}, it holds that
	\begin{align*}
		\|e^n\|\le \frac{C_e}{\alpha(1-\alpha)}E_{\alpha}(2\rho t_n^{\alpha}/r^*)\left(  \tau^{\min\{\gamma\sigma,2-\alpha\}}+h^2\right) \quad\text{for}~1\le n\le N.
	\end{align*}
	Hence the optimal temporal accuracy $O(\tau^{2-\alpha})$ is achieved when $\gamma \ge \max\{1,(2-\alpha)/\alpha\}$.
\end{theorem}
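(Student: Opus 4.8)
The plan is to run the standard energy argument on the error equation \eqref{def:error scheme}, convert it into a scalar differential inequality for $\|e^n\|$, and then close the estimate with the discrete fractional Grönwall inequality. First I would take the discrete inner product of \eqref{def:error scheme} with $e^n$, so that the three terms on the left become $\myinner{D_\tau^\alpha e^n,e^n}$, the nonnegative elliptic contribution $\myinner{(1+\Delta_h)^2e^n,e^n}=\|(1+\Delta_h)e^n\|^2$ (via the discrete Green's formula), and the nonlinear difference $\myinner{f(U^n)-f(u^n),e^n}$, with the two consistency errors on the right.

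For the memory term I would use the discrete fractional chain-rule inequality $\myinner{D_\tau^\alpha e^n,e^n}\ge\|e^n\|\,D_\tau^\alpha\|e^n\|$, which follows from the positivity and monotonicity \eqref{monotonic} of the kernels together with Cauchy--Schwarz and an Abel summation (the same structural fact underlying Lemma \ref{lem:inequation of an}). For the nonlinear term, factoring $f(U^n)-f(u^n)=\big[(U^n)^2+U^nu^n+(u^n)^2-\mathrm{g}(U^n+u^n)-\epsilon\big]e^n$ and bounding the bracket pointwise by $\rho=c_1^2+c_1c_0+c_0^2+\mathrm{g}(c_0+c_1)+\epsilon$ — this is exactly where the regularity bound $\|u\|_\infty\le c_1$ in \eqref{regularity} and the discrete maximum bound $\|u^n\|_\infty\le c_0$ from Lemma \ref{lem:Bound-Solution} enter — yields $|\myinner{f(U^n)-f(u^n),e^n}|\le\rho\|e^n\|^2$. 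Dropping the nonnegative elliptic term and estimating the right-hand side by $(\|\xi^n\|+\|\eta^n\|)\|e^n\|$, I would divide through by $\|e^n\|$ (the case $\|e^n\|=0$ being trivial) to obtain the clean scalar inequality
\begin{align*}
D_\tau^\alpha\|e^n\|\le\rho\|e^n\|+\|\xi^n\|+\|\eta^n\|,\qquad 1\le n\le N.
\end{align*}

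With this in hand I would apply the discrete fractional Grönwall inequality from the same framework \cite{liaoyanyg}: under the step restriction $\tau\le\tau^*$ (whose second branch is precisely $2\Gamma(2-\alpha)\rho\,\tau^\alpha\le1$) and using $e^0=0$, it delivers
\begin{align*}
\|e^n\|\le C_eE_\alpha(2\rho t_n^\alpha/r^*)\max_{1\le k\le n}\sum_{j=1}^k p_{k-j}^{(k)}\big(\|\xi^j\|+\|\eta^j\|\big).
\end{align*}
It then remains to estimate the two accumulated consistency errors. The temporal part $\sum_j p_{k-j}^{(k)}\|\xi^j\|$ is controlled directly by Lemma \ref{lem: errors in time}, producing the $\tau_1^\sigma/\sigma$ and $\max_k t_k^\alpha t_{k-1}^{\sigma-2}\tau_k^{2-\alpha}$ contributions. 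For the spatial part I would use the standard second-order truncation bound $\|\eta^j\|\le Ch^2$ (justified by the regularity \eqref{regularity}) together with the elementary bound $\sum_{j=1}^k p_{k-j}^{(k)}\le t_k^\alpha/\Gamma(1+\alpha)\le T^\alpha/\Gamma(1+\alpha)$ on the DCC kernels, so that this term contributes $O(h^2)$. Combining the two pieces gives the first estimate; substituting instead the mesh-dependent bound from the second part of Lemma \ref{lem: errors in time} under \eqref{mesh}, and optimizing $\gamma$, yields the $\tau^{\min\{\gamma\sigma,2-\alpha\}}$ rate and the stated choice $\gamma\ge\max\{1,(2-\alpha)/\alpha\}$.

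The main obstacle, and the step deserving the most care, is the passage from the vector energy identity to the scalar inequality for $\|e^n\|$ itself: establishing $\myinner{D_\tau^\alpha e^n,e^n}\ge\|e^n\|D_\tau^\alpha\|e^n\|$ (rather than merely $\frac12 D_\tau^\alpha\|e^n\|^2$) is what allows the Grönwall inequality to deliver $\|e^n\|$ to the first power, and hence a single Mittag--Leffler factor matching the claim; it is also what forces the growth constant $\rho$ and the threshold $\tau^*$ to align. Everything downstream — the nonlinear and truncation estimates and the DCC-kernel summations — is routine once the uniform bounds $c_0,c_1$ and Lemma \ref{lem: errors in time} are in place.
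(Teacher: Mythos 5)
Your proposal is correct and follows essentially the same route as the paper: inner product with $e^n$, reduction of the memory term to $\|e^n\|\sum_{k=1}^n a_{n-k}^{(n)}\diff\|e^k\|$ via kernel monotonicity and Cauchy--Schwarz, the pointwise bound $\rho$ on the factored nonlinearity using $c_0$ and $c_1$, and the discrete fractional Gr\"{o}nwall inequality combined with Lemma \ref{lem: errors in time}. The only cosmetic difference is that you absorb the spatial error into the DCC-kernel sum and bound $\sum_{j}p_{k-j}^{(k)}$ by $\omega_{1+\alpha}(t_k)$, whereas the paper states that term separately as $\omega_{1+\alpha}(t_n)\max_k\|\eta^k\|$; these are equivalent.
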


\begin{proof}
Taking the inner product of \eqref{def:error scheme} with $e^n$, one has
\begin{align}\label{e1}
	\myinnerb{D_\tau^{\alpha}e^n,e^n}+\myinnerb{(1+\Delta_h)^2e^n,e^n}+\myinnerb{f(U^n)-f(u^n),e^n}=\myinnerb{\xi^n+\eta^n,e^n}.
\end{align}
With the help of Cauchy-Schwarz inequality, the right-hand side term associated with the truncation error is handled in a straightforward way,
\begin{align*}
	\myinnerb{\xi^n+\eta^n,e^n} \leq (\| \xi^n \|+\|\eta^n \|)\|e^n\|.
\end{align*}
For the first term on the left hand side, using the monotone property \eqref{monotonic} and Cauchy-Schwarz inequality yields
\begin{align*}
	\myinnerb{D_\tau
		^{\alpha}e^n,e^n}&=\myinnerb{\sum_{k=1}^na_{n-k}^{(n)}\diff e^k,e^n}\\
	&=\myinnerb{a_0^{(n)}e^n+\sum_{k=1}^{n-1}(a_{n-k}^{(n)}-a_{n-k-1}^{(n)})e^k-a_{n-1}^{(n)}e^0,e^n}\\
	&\ge a_0^{(n)}\|e^n\|^2+\sum_{k=1}^{n-1}(a_{n-k}^{(n)}-a_{n-k-1}^{(n)})\|e^k\|\|e^n\|-a_{n-1}^{(n)}\|e^0\|\|e^n\|\\
	&=\|e^n\|\sum_{k=1}^na_{n-k}^{(n)}\diff\|e^k\|.
\end{align*}
For the third term on the left hand side, denote
\begin{align*}
	f_h^n=(U_h^n)^2+U_h^nu_h^n+(u_h^n)^2-\mathrm{g}(U_h^n+u_h^n)-\epsilon,
\end{align*} 
it follows from Lemma \ref{lem:Bound-Solution} and the regularity assumption that $$\|{ f^n}\|_\infty\le \rho. $$ Thus, a direct application of Cauchy-Schwarz inequality gives
\begin{align*}
	\myinnerb{f(U^n)-f(u^n),e^n}=\myinner{f^ne^n,e^n}\le
	\rho \| e^n \|^2.
\end{align*}
Noticing that $\myinnerb{(1+\Delta_h)^2e^n,e^n}=\|(1+\Delta_h)e_h^n\|^2 \ge 0$, then we have 
\begin{align*}
	\sum_{k=1}^na_{n-k}^{(n)}\diff\|e^k\|\le \rho\|e^n\|+\|\xi^n\|+\|\eta^n\|.
\end{align*}
With the help of the discrete fractional Gr\"{o}nwall inequality \cite{liaoyanyg}, when the maximum time step size $\tau \le\tau^*$, it holds that
\begin{align*}
\|e^n\| &\le 2E_{\alpha}(2\rho t_n^{\alpha}/r^*)\left(\lVert e^0 \rVert +\max_{1\le k\le n}\sum_{j=1}^kp_{k-j}^{(k)}\|\xi^n\|+\omega_{1+\alpha}(t_n)\max_{1\le k\le n}\|\eta^n\| \right)
\quad\text{for}~1\le n\le N.
\end{align*}
Using of the regularity assumption \eqref{regularity} and Lemma \ref{lem: errors in time} produces the desired estimate, and the proof is completed.
\end{proof}

\section{Numerical experiments}
Some numerical simulations are presented in this section to support the theoretical results. To reduce the computational cost and storage, the sum-of-exponentials technique \cite{jiangshidong} for the Caputo derivative with the absolute tolerance error $10^{-12}$ is employed. The nonlinear system of equations is solved by the fixed-point iteration method with the termination error $10^{-12}$. 

The temporal accuracy order of the variable-step L1 scheme \eqref{def:discrete scheme} is verified. Denote the discrete $L^2$ norm error $e(N):=\|U^N-u^N\|,$ the order of convergence in time direction is
defined by 
$$\mathrm{Order} \approx \frac{{\rm log}(e(N)/e(2N))}{{\rm log}(\tau(N)/\tau(2N))}.$$

\begin{example}\label{example1}
	{(Temporal accuracy test)}
	Consider the TFSH model with a forcing term $g(\mathbf{x},t)$, i.e.,  $^C_0D_t^{\alpha}u+(1+\Delta)^2u+f(u)=g(\mathbf{x},t)$. We solve it on a rectangular domain $\Omega=[0,2\pi]^2$ with $T=1$ and the parameters $\mathrm{g}$=0.1, $\epsilon$=0.5. The exact solution is set as  $$u(x,y,t)=\frac{t^{\sigma}}{\Gamma(1+\sigma)}\sin x\sin y,~~(x,y) \in \Omega,~~0\le t\le T,$$
	where $\sigma \in (0,1)$ is a regularity parameter. 
\end{example}

The spatial domain is divided into a $256^2$ uniform mesh. The time interval $[0,T]$ is divided into two parts: $[0,T_0]$ and $[T_0,T]$, with total $N$ subintervals, where $T_0=\min\{1/\gamma,T\}$. The graded mesh $t_k=T_0(k/N_0)^{\gamma}$ is applied to the first part, where $N_0=\left[\frac{N}{T+1-\gamma^{-1}}\right]$, and the random time mesh with $\tau_{N_0+k}:=(T-T_0)\epsilon_k/S_1$ for $1\le k\le N_1$ are used in the remainder interval, where $N_1=N-N_0$, $S_1=\sum_{k=1}^{N_1}\epsilon_k$ and $\epsilon_k \in (0,1)$ are random numbers.

The numbers of temporal intervals are varied by $N=20,40,80,160$ for $\alpha=0.5$ and $\alpha=0.8$. The optimal grading parameter $\gamma_{\text{opt}}=(2-\alpha)/\sigma$ is suggested to achieve the optimal order $O(\tau^{2-\alpha})$. It is seen from Table \ref{alpha=0.5} and Table \ref{alpha=0.8} that when $\gamma<\gamma_{\text{opt}}$, the convergence order is $O(\tau^{\gamma \sigma})$, whereas when $\gamma\ge\gamma_{\text{opt}}$, the convergence order is $O(\tau^{2-\alpha})$, which shows the sharpness of our theoretical result.
\begin{table}[tbh!]
	\begin{center}
		\caption{Errors and convergence orders in time with $\alpha$=0.5, $\sigma$=0.3 } 
		\vspace*{0.3pt}
		\def\temptablewidth{1\textwidth}
		{\rule{\temptablewidth}{1pt}}
		\begin{tabular*}{\temptablewidth}{@{\extracolsep{\fill}}ccccccc}
			\hline
			\multicolumn{1}{c}{}  & \multicolumn{2}{c}{$\gamma$=4} & \multicolumn{2}{c}{$\gamma$=5} & \multicolumn{2}{c}{$\gamma$=6} \\ \cline{2-3}\cline{4-5}\cline{6-7} 
			\multicolumn{1}{c}{$N$} & $e(N)$             & Order         & $e(N)$           & Order           & $e(N)$          & Order          \\ \hline
			20                    & 9.32E-03         & *             & 4.13E-03             & *               & 3.91E-03             & *              \\
			40                    & 4.27E-03          & 1.13        & 1.56E-03             & 1.41               & 1.43E-03             & 1.45              \\
			80                    & 1.86E-03          & 1.20         & 5.69E-04              & 1.45               & 5.16E-04             & 1.47              \\
			160                    & 8.09E-04          & 1.20          & 2.05E-04             & 1.48               & 1.84E-04   & 1.49                         \\ \hline
		\end{tabular*}
		{\rule{\temptablewidth}{1pt}}
		\label{alpha=0.5}
	\end{center}
\end{table}

\begin{table}[tbh!]
	\begin{center}
		\caption{Errors and convergence orders in time with $\alpha$=0.8, $\sigma$=0.3 } 
		\vspace*{0.3pt}
		\def\temptablewidth{1\textwidth}
		{\rule{\temptablewidth}{1pt}}
		\begin{tabular*}{\temptablewidth}{@{\extracolsep{\fill}}ccccccc}
			\hline
			\multicolumn{1}{c}{}  & \multicolumn{2}{c}{$\gamma$=3} & \multicolumn{2}{c}{$\gamma$=4} & \multicolumn{2}{c}{$\gamma$=5} \\ \cline{2-3}\cline{4-5}\cline{6-7} 
			\multicolumn{1}{c}{$N$} & $e(N)$            & Order         & $e(N)$           & Order           & $e(N)$          & Order          \\ \hline
			20                    & 2.34E-02         & *             & 1.56E-02              & *               & 1.48E-02           & *              \\
			40                    &1.26E-02          & 0.89
          & 7.63E-03             & 1.03               & 6.81E-03
            & 1.12              \\
			80                    & 6.77E-03         & 0.90         & 3.45E-03              & 1.14               & 3.10E-03
            & 1.14              \\
			160                    & 3.63E-03          & 0.90          & 1.54E-03              & 1.17               & 1.37E-03    & 1.17    \\               \hline
		\end{tabular*}
		{\rule{\temptablewidth}{1pt}}
		\label{alpha=0.8}
	\end{center}
\end{table}

The second numerical simulation is to show the coarsening dynamics of the TFSH model. Similar to Example \ref{example1}, the time interval $[0,T]$ is divided into two parts: $[0,T_0]$ and $[T_0,T]$. The first part $[0,T_0]$ is treated as in Example \ref{example1} with the grading parameter $\gamma=3$, while for the remainder interval $[T_0,T]$, an adaptive time-stepping strategy related to the change rate of the numerical solution is adopted, cf.\cite{liaozhuwangnmtma},
$$\tau_{ada}=\max\left\lbrace \tau_{\min},\frac{\tau_{\max}}{\sqrt{1+\eta\|\diff u^n/\tau_n\|^2}}\right\rbrace ,$$
where $\eta$ is a regular parameter, $\tau_{\max}$ and $\tau_{\min}$ are the maximum and the minimum size of time steps.
\begin{example}
	We consider the TFSH model on $\Omega=(0,32)^2$ with $\epsilon=0.85$, subjected to the periodic boundary conditions and the initial data
	\begin{align*}
		u(x,y,0)=&0.07-0.02\cos\braB{\frac{2\pi(x-12)}{32}}\sin\braB{\frac{2\pi(y-1)}{32}}\\
		&+0.02\cos^2\braB{\frac{\pi(x+10)}{32}}\sin^2\braB{\frac{\pi(y+3)}{32}}\\
		&-0.01\sin^2\braB{\frac{4\pi x}{32}}\sin^2\braB{\frac{4\pi(y-6)}{32}}.
	\end{align*}
\end{example}

We investigate the effect of the adaptive time-stepping strategy parameter $\eta$. Take $\tau_{\max}=10^{-1}$, $\tau_{\min}=10^{-3}$ and the spatial step size $h=1/3$. The uniform temporal mesh with $\tau=10^{-2}$ and the adaptive time-stepping strategy with $\eta=10,10^2,10^3$ are utilized respectively in the calculations. The evolutions of the original energies, the modified energies and the time steps are demonstrated in Figure \ref{fig: different eta}. Besides, the corresponding number of time levels and the CPU time are listed in Table \ref{CPUalpha=0.8}, which shows that the parameter $\eta$ definitely affects the adaptive time step size, particularly, $\eta\,  (\eta=10)$ generated the minimum fluctuation of the time step. The above findings show that the variable-step L1 scheme captures the changes of the original energy as well as the modified energy accurately and efficiently.

\begin{figure}[tbh!]
	\centering
	\subfigure[energy]{
		\includegraphics[width=2in]{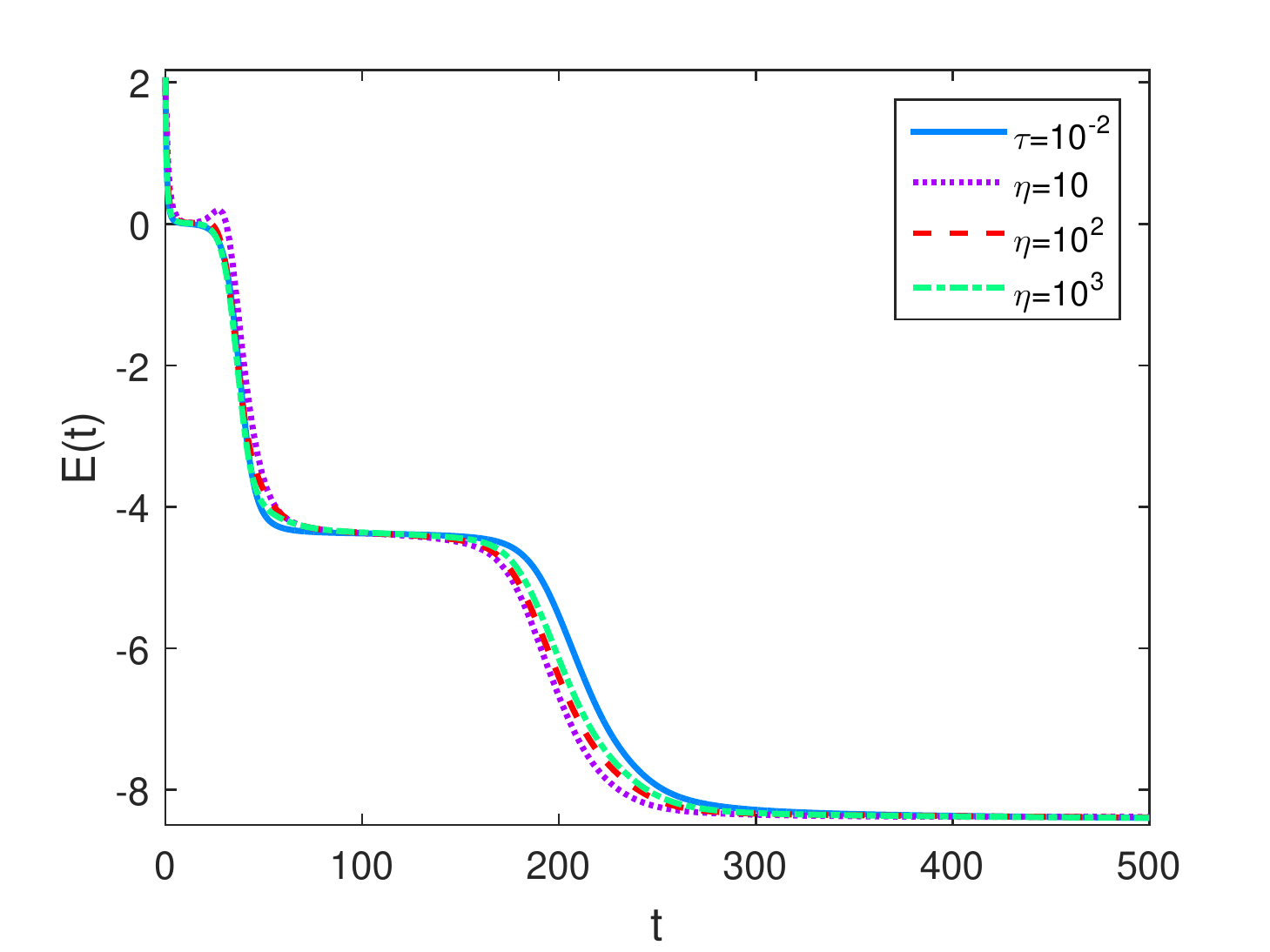}}
	\subfigure[vatiational energy]{
		\includegraphics[width=2in]{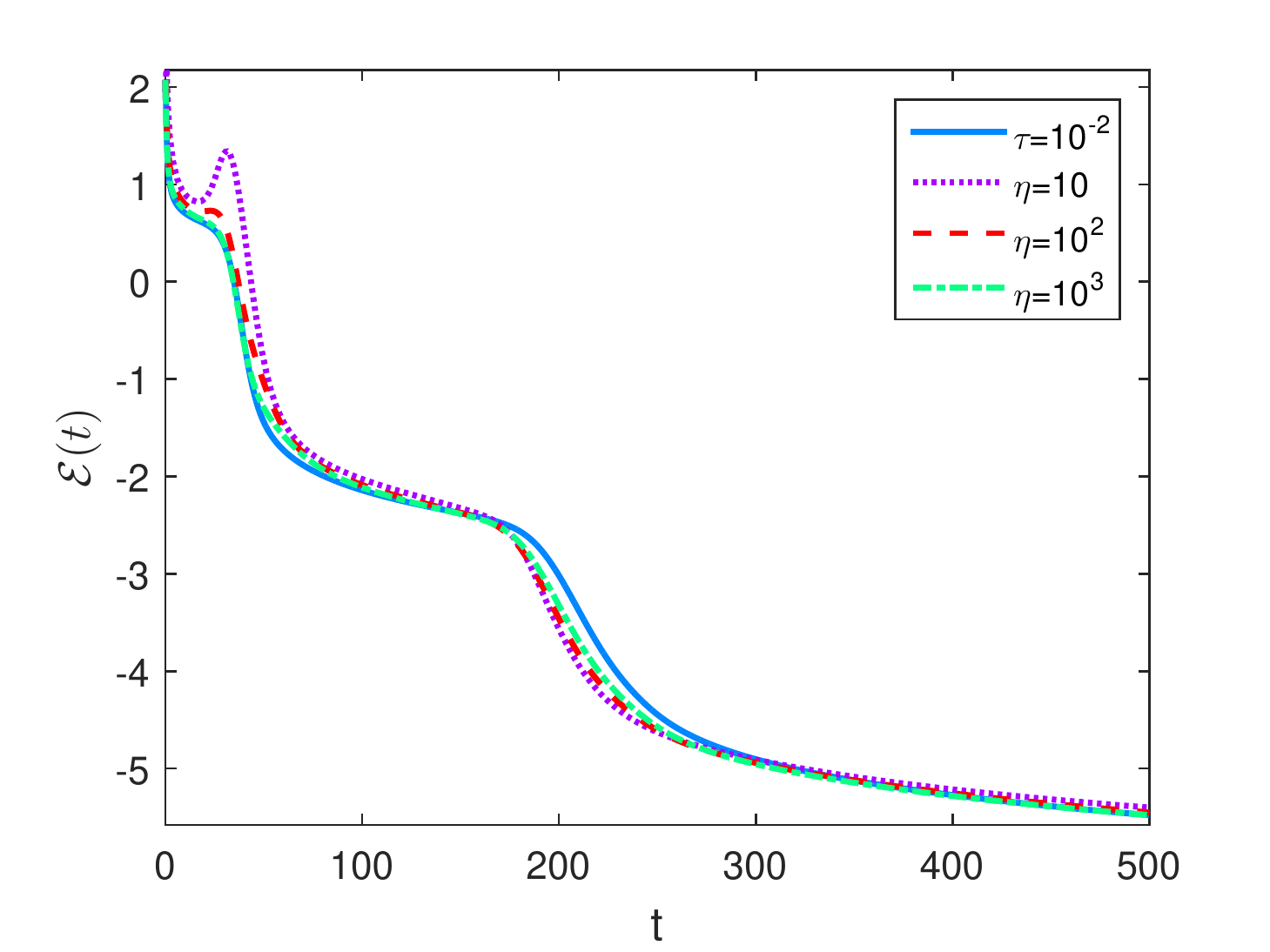}}
	\subfigure[time steps]{
		\includegraphics[width=2in]{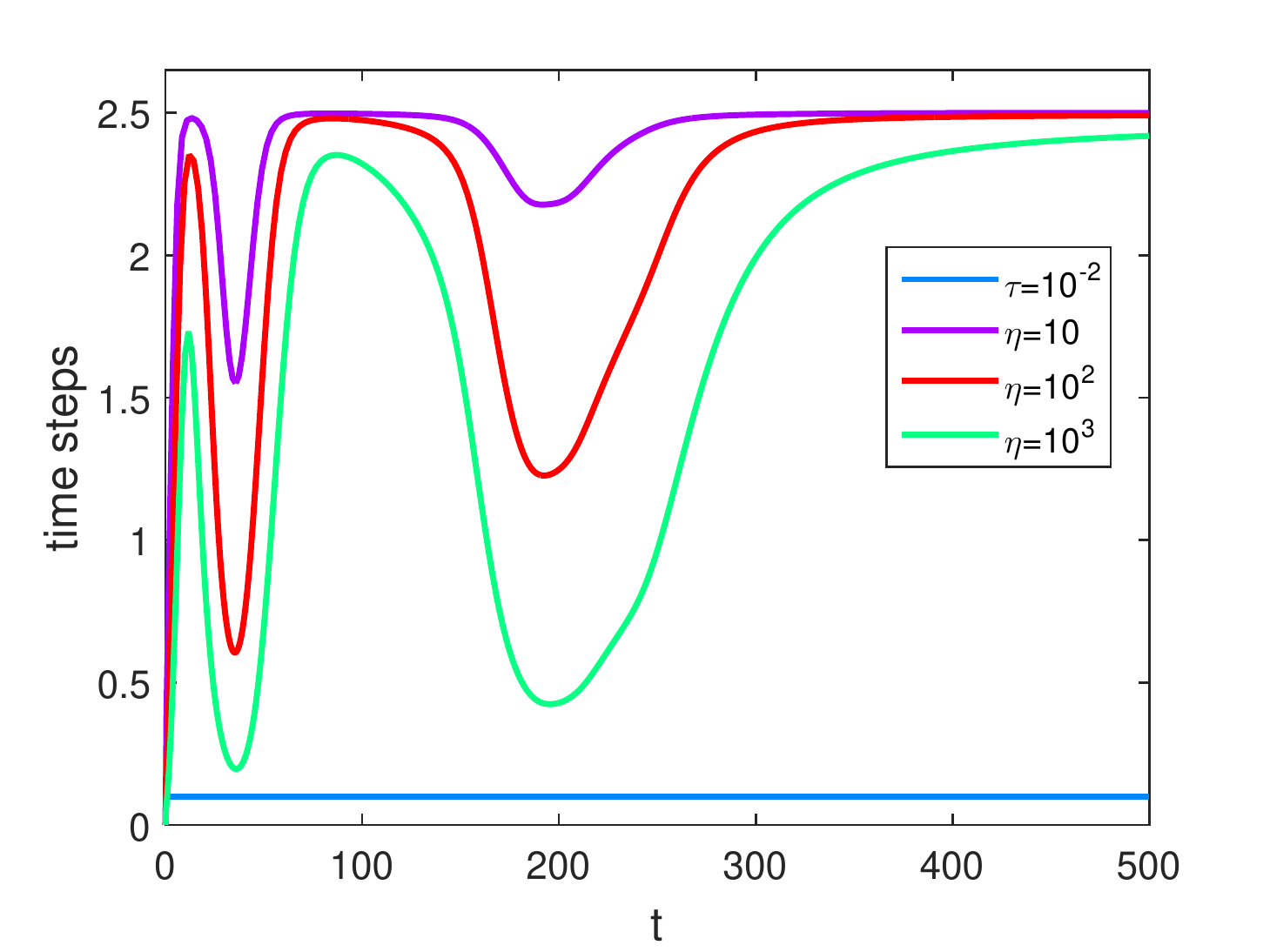}}
	\caption{Evolution of energy by uniform step and adaptive time-stepping strategy with different parameters $\eta$}
	\label{fig: different eta}
\end{figure}

\begin{table}[tbh!]
	\begin{center}
		\caption{Errors and convergence orders in time with $\alpha$=0.8, $\sigma$=0.3 } 
		\vspace*{0.3pt}
		\def\temptablewidth{1\textwidth}
		{\rule{\temptablewidth}{1pt}}
		\begin{tabular*}{\temptablewidth}{@{\extracolsep{\fill}}ccc}
			Time-stepping strategy    & Time levels & CPU(s)  \\
			\cline{1-3}
  uniform step ($\tau=10^{-2}$)   & 5030        & 716 \\ 
  adaptive strategy ($\eta=10$)   & 243         & 34  \\
  adaptive strategy ($\eta=10^2$) & 291         & 41  \\
  adaptive strategy ($\eta=10^3$) & 501         & 70  
		\end{tabular*}
		{\rule{\temptablewidth}{1pt}}
		\label{CPUalpha=0.8}
	\end{center}
\end{table}
To see the pattern formation of the TFSH model, we perform the proposed numerical scheme with $\tau_{\max}=10^{-1}$, $\tau_{\min}=10^{-3}$ and $\eta=10$ in the adaptive time-stepping strategy for fixed $\alpha=0.6$. Figure \ref{fig:solution_g=1} and Figure \ref{fig:solution_g=0} show the profiles at four observation times with $\mathrm{g}=1$ and $\mathrm{g}=0$, respectively. As $\mathrm{g}=0$, i.e., the effect of cubic term in the energy function is vanished, we can observe the striped pattern, however the hexagonal pattern appeared as $\mathrm{g}=1$. Such phenomenon is also reported for the classic SH model in \cite{leecmame,yangjxkim2022}. The energy stabilities with different $\mathrm{g}$ are displayed in Figure \ref{fig:energy_g}. These numerical evidences suggest that the parameter $\mathrm{g}$ plays a dominated role in the formation of different patterns.

\begin{figure}[tbh!]
	\centering
	\subfigure[t=64]{
		\includegraphics[width=1.4in]{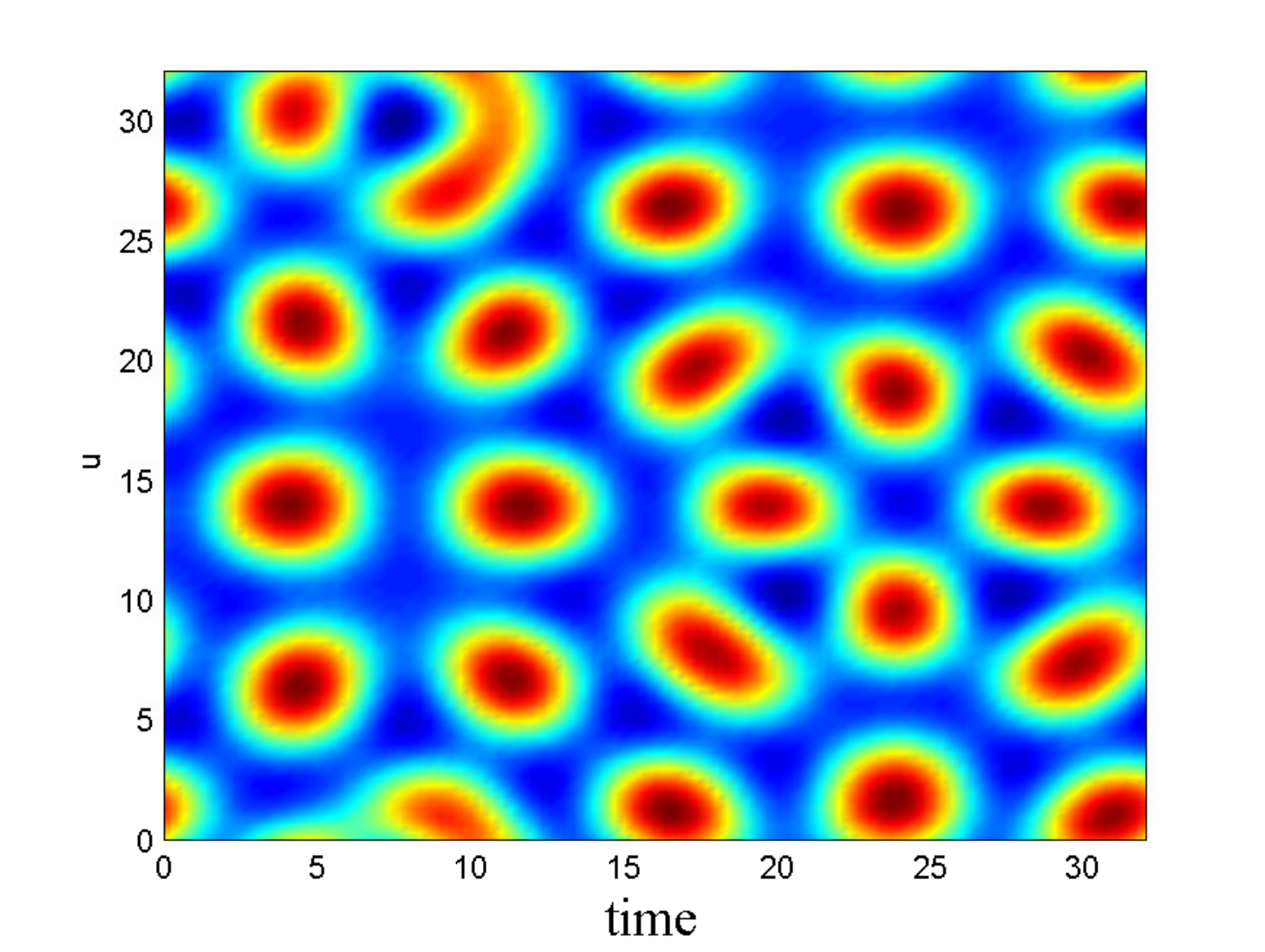}}
	\subfigure[t=128]{
		\includegraphics[width=1.4in]{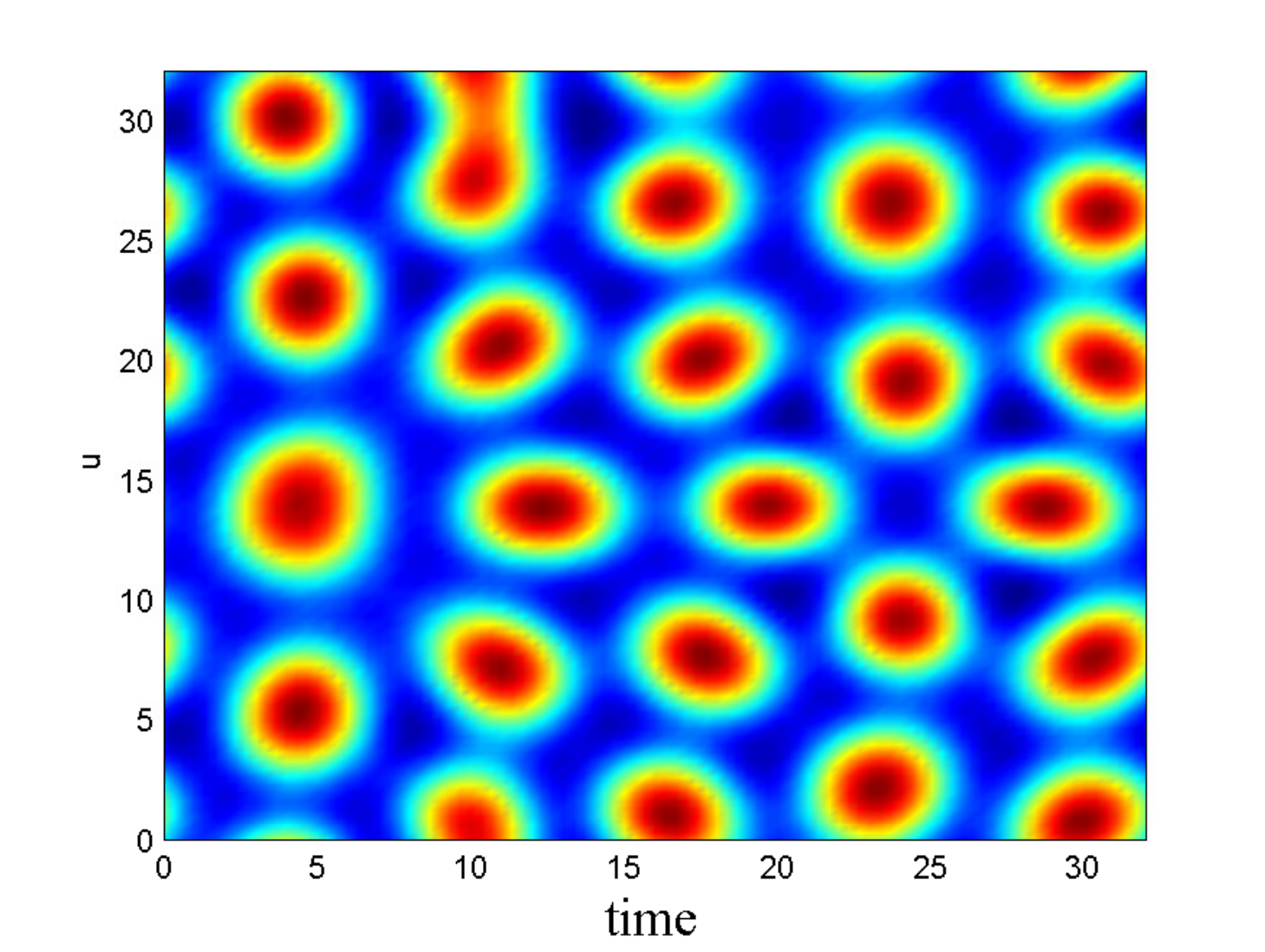}}
	\subfigure[t=256]{
		\includegraphics[width=1.4in]{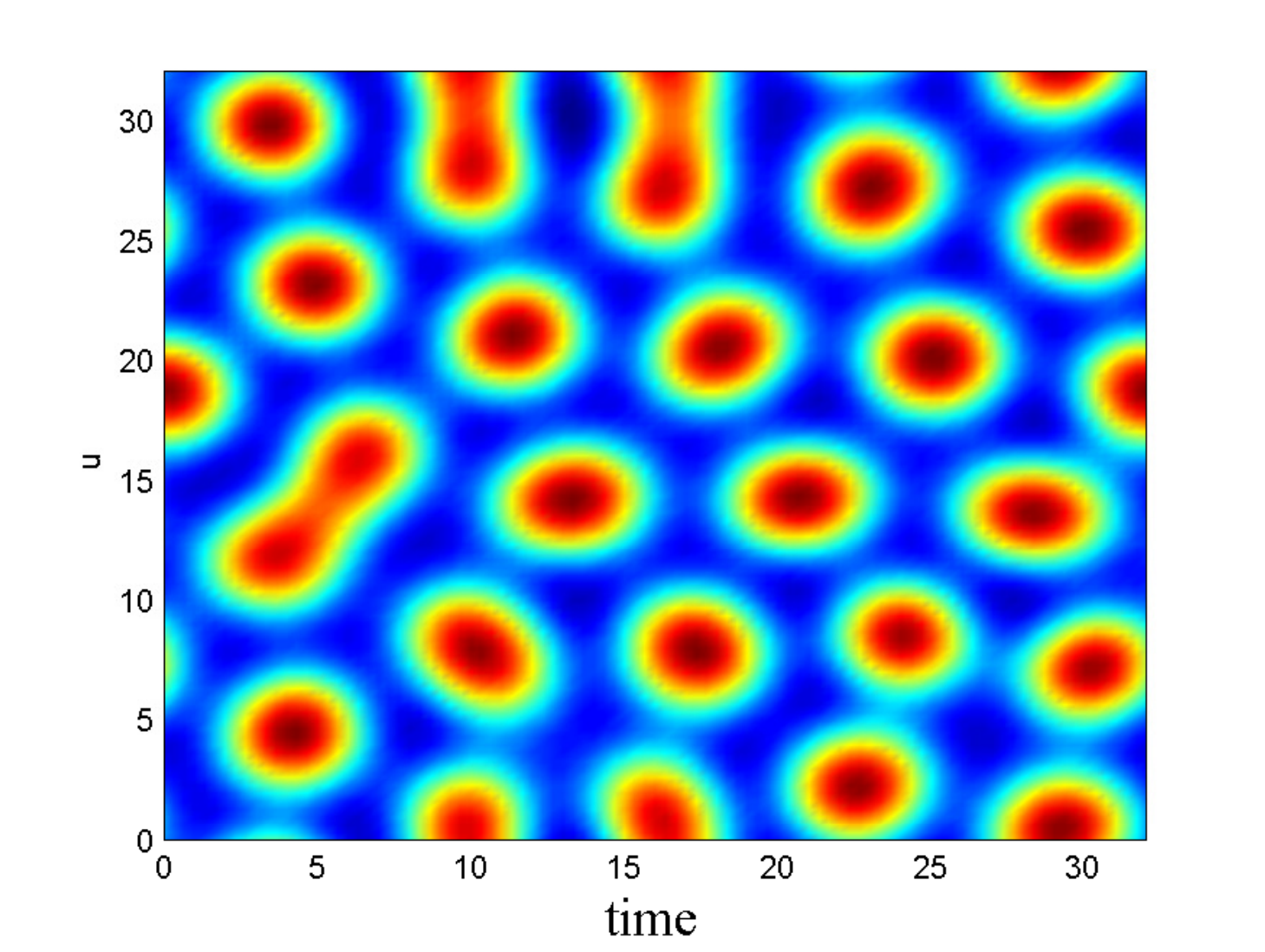}}
	\subfigure[t=512]{
		\includegraphics[width=1.4in]{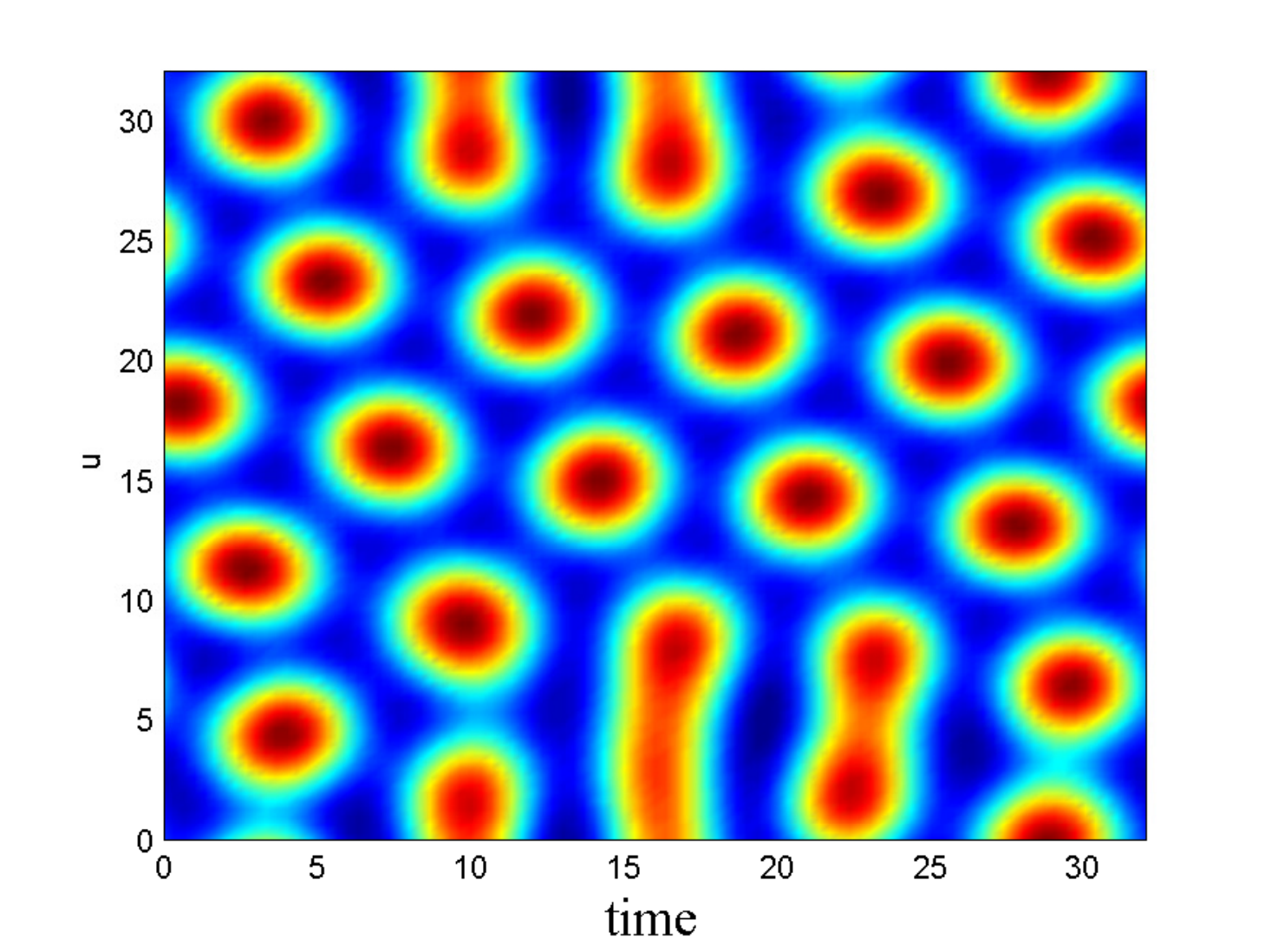}}
	\caption{Time snapshots of TFSH model at $t=64, 128, 256, 512$ with  $\alpha=0.6$ and $\mathrm{g}=1$}
	\label{fig:solution_g=1}
\end{figure}

\begin{figure}[tbh!]
	\centering
	\subfigure[t=64]{
		\includegraphics[width=1.4in]{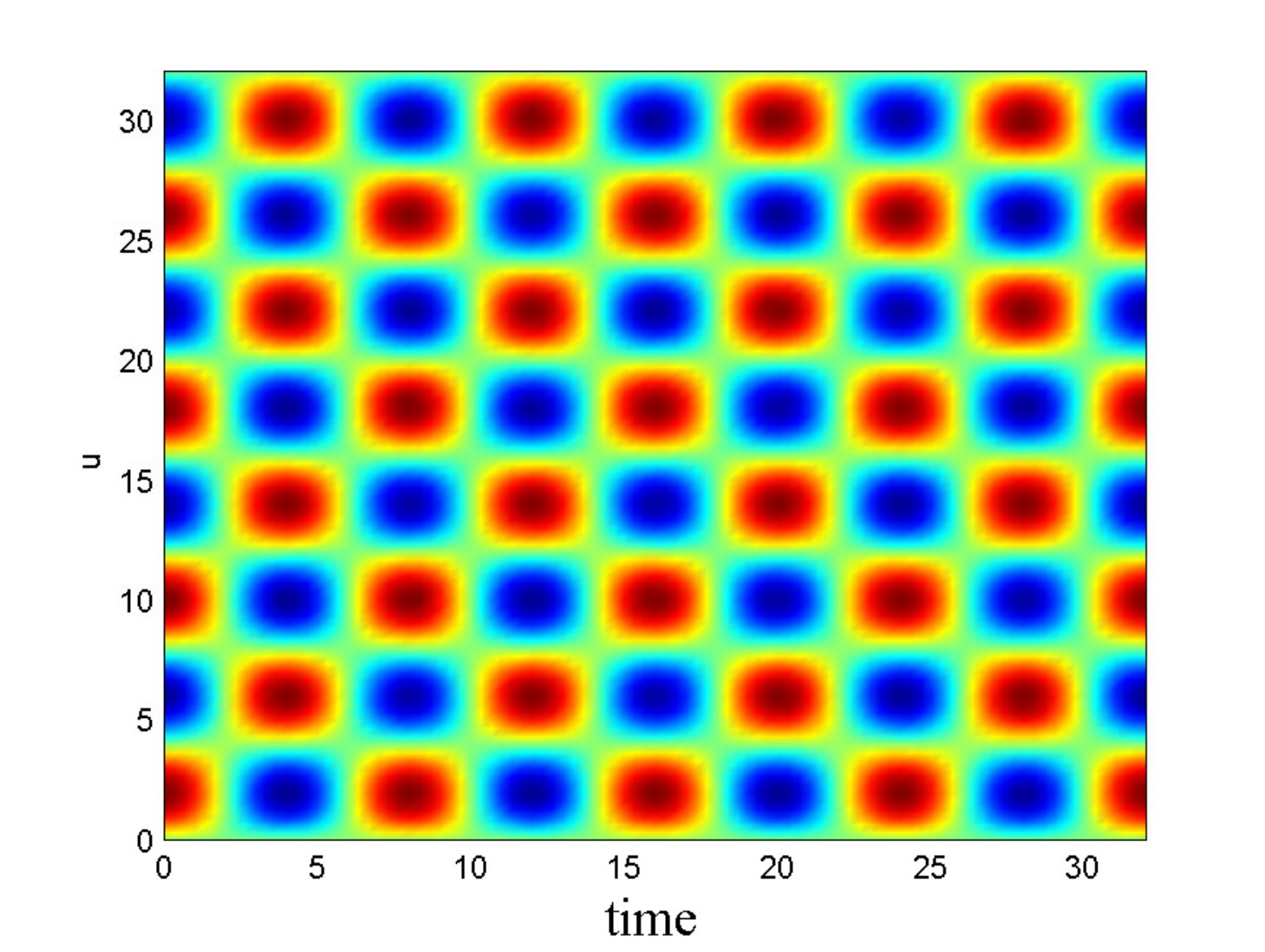}}
	\subfigure[t=128]{
		\includegraphics[width=1.4in]{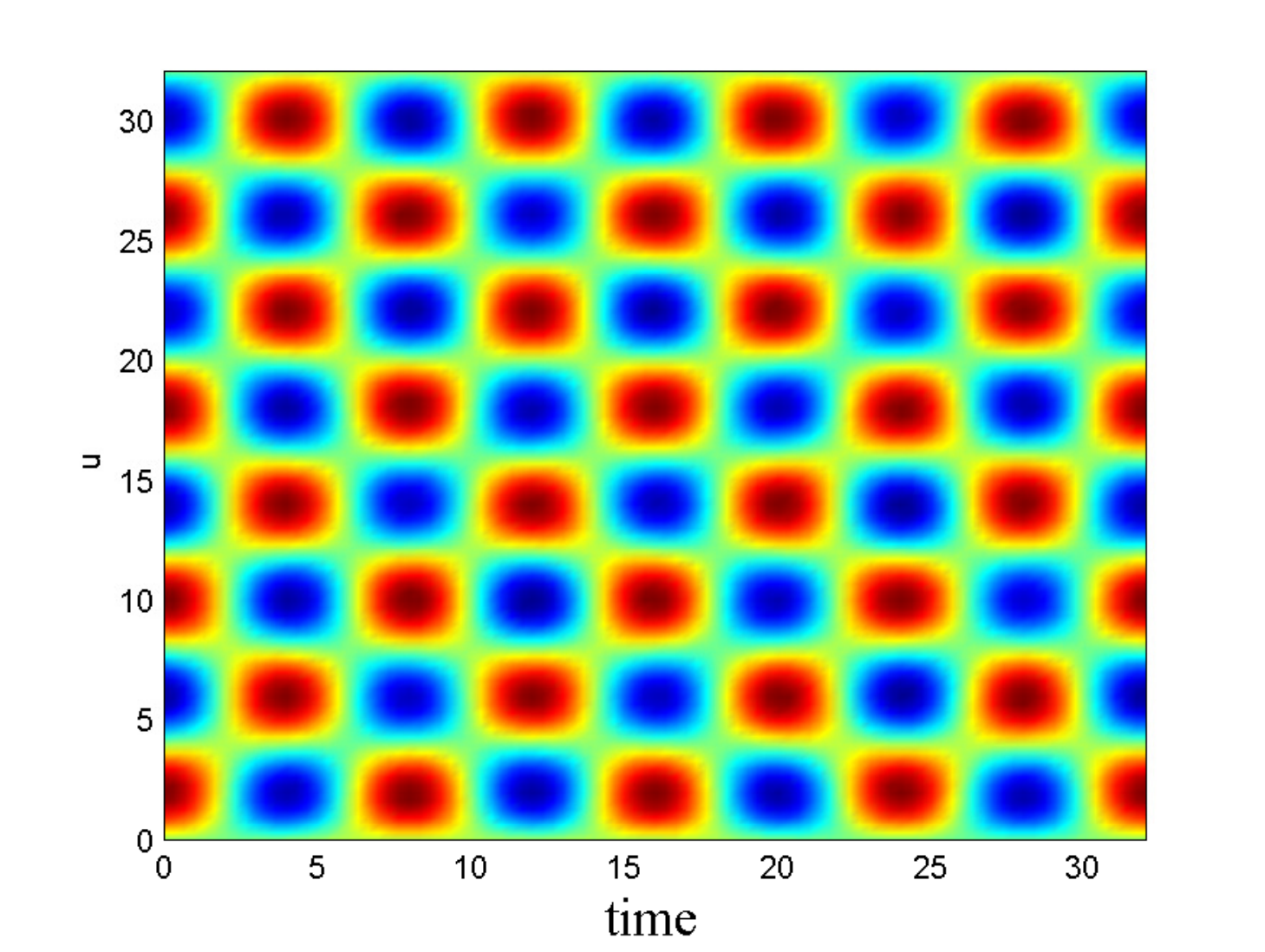}}
	\subfigure[t=256]{
		\includegraphics[width=1.4in]{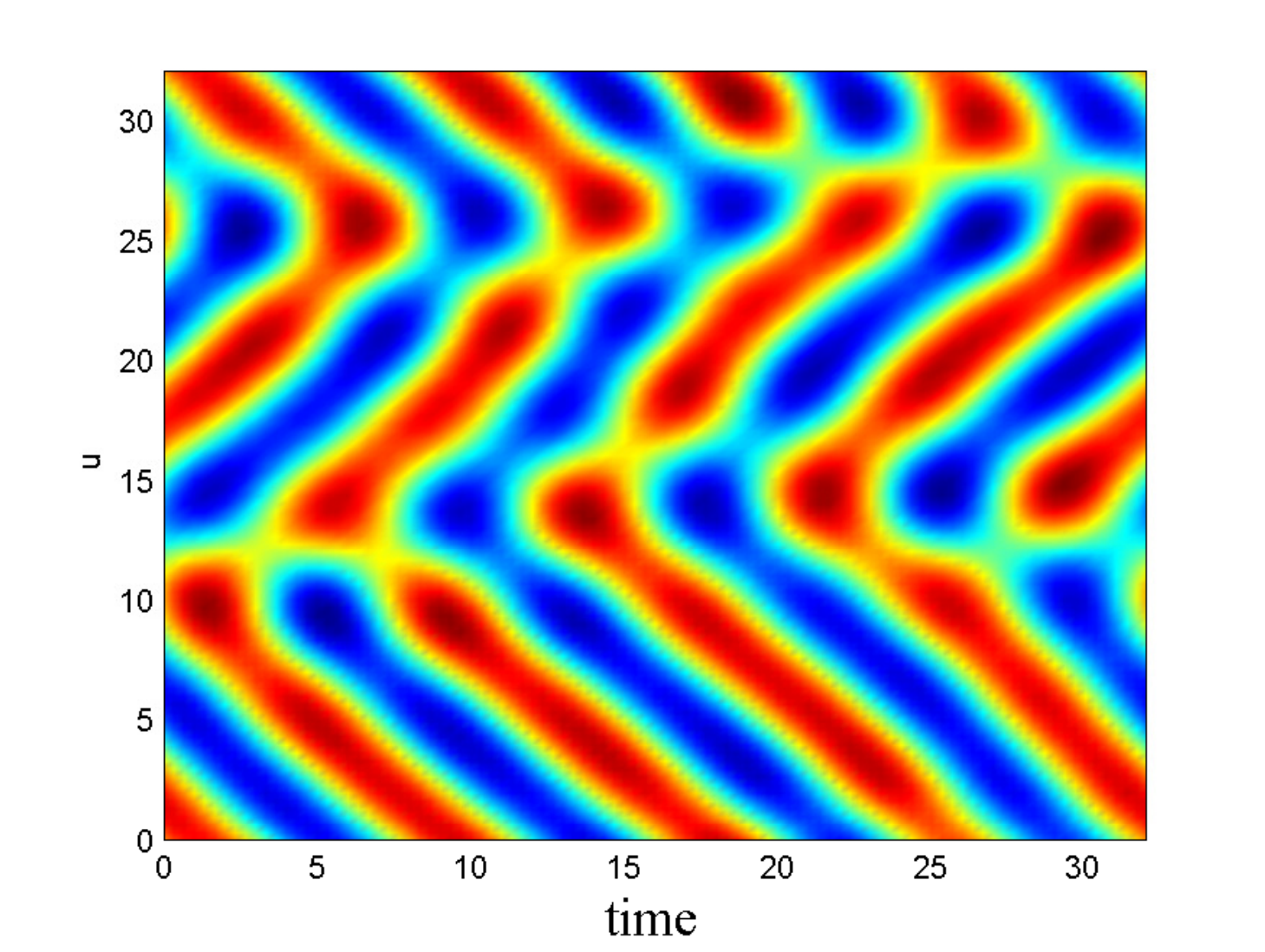}}
	\subfigure[t=512]{
		\includegraphics[width=1.4in]{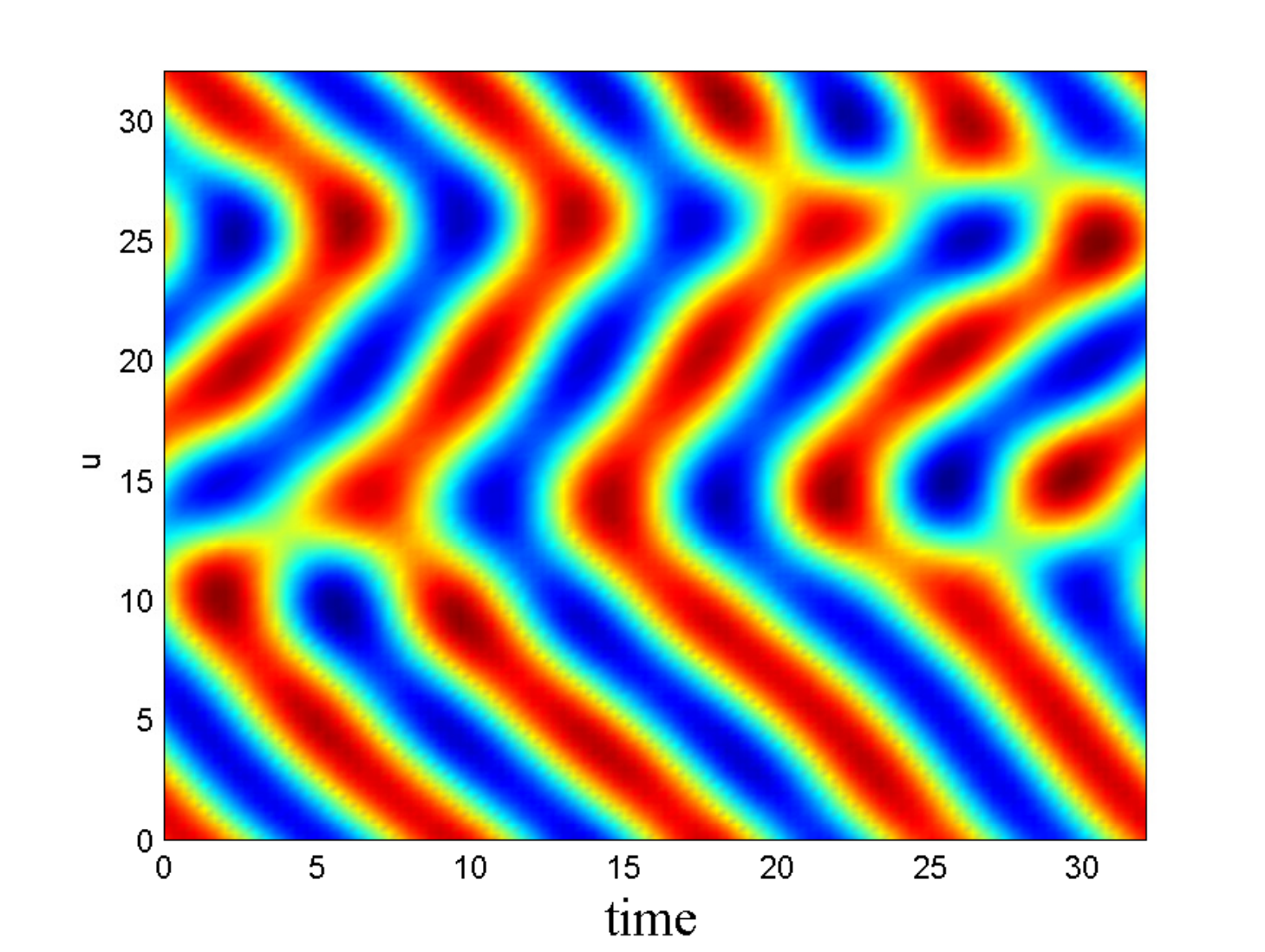}}
	\caption{Time snapshots of TFSH model at $t=64, 128, 256, 512$ with $\alpha=0.6$ and $\mathrm{g}=0$}\label{snapshot-figureg1}
	\label{fig:solution_g=0}
\end{figure}

\begin{figure}[tbh!]
	\centering
	\subfigure[energy]{
		\includegraphics[width=2in]{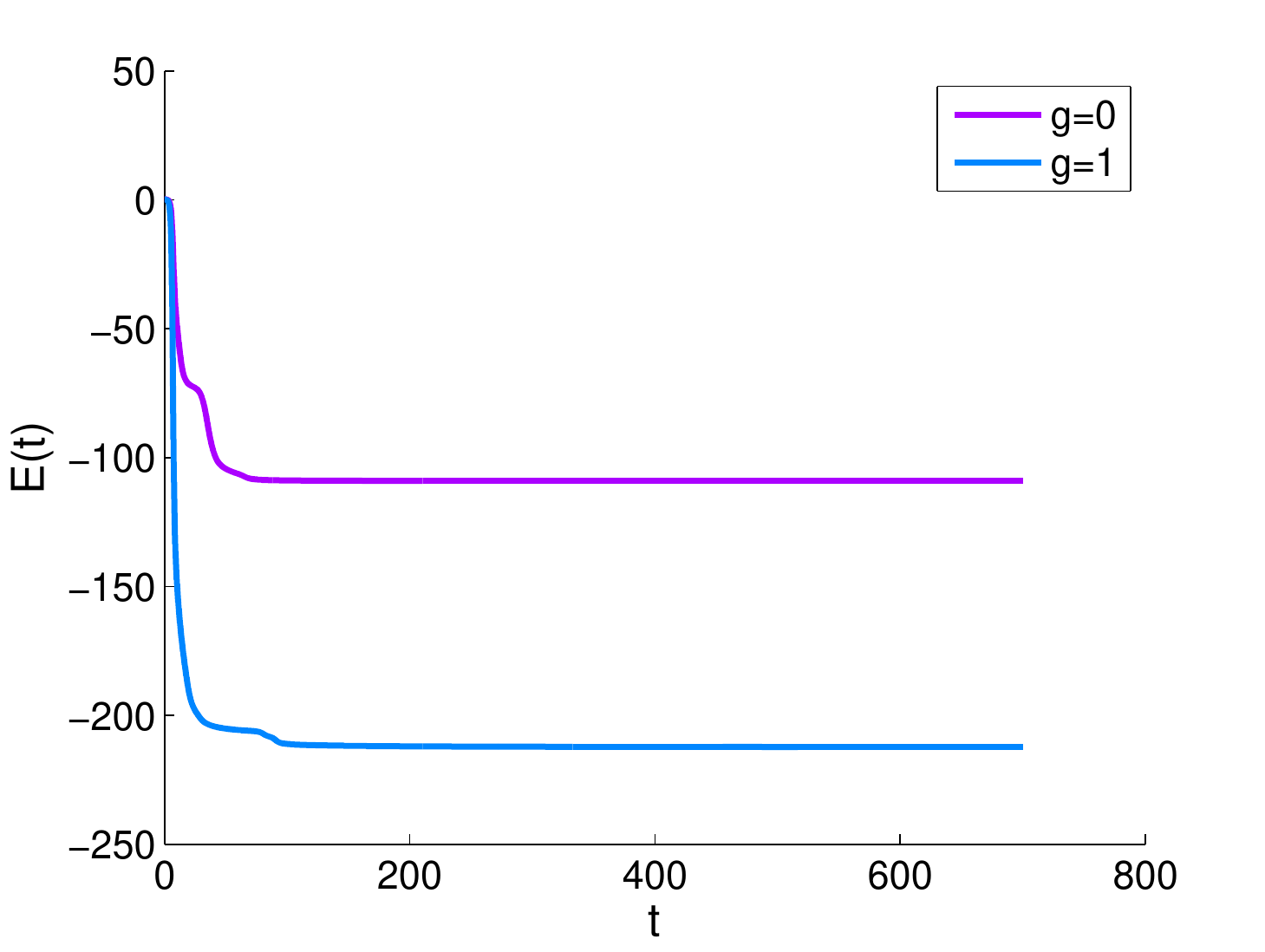}}
	\subfigure[vatiational energy]{
		\includegraphics[width=2in]{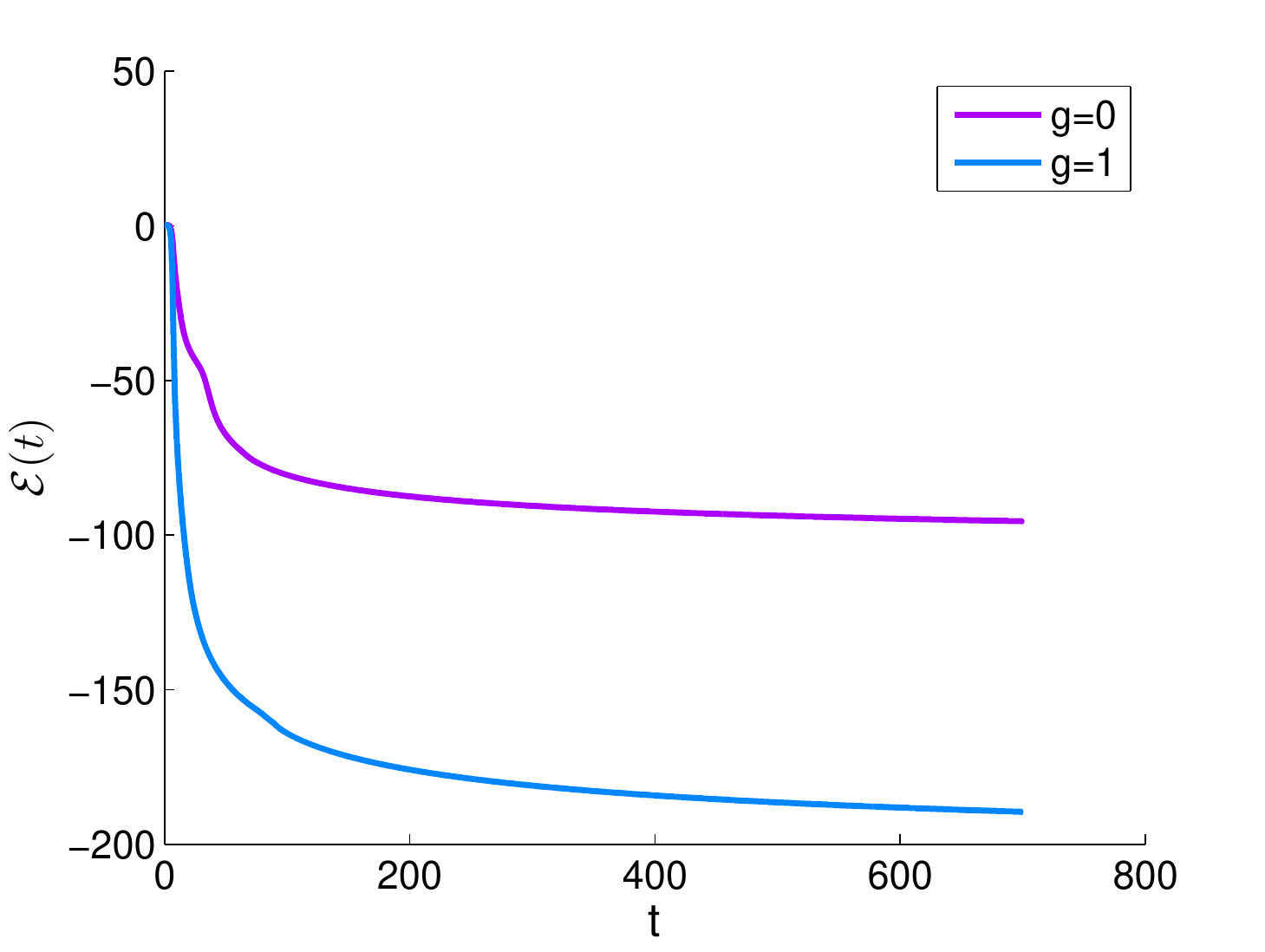}}
	\subfigure[time steps]{
		\includegraphics[width=2in]{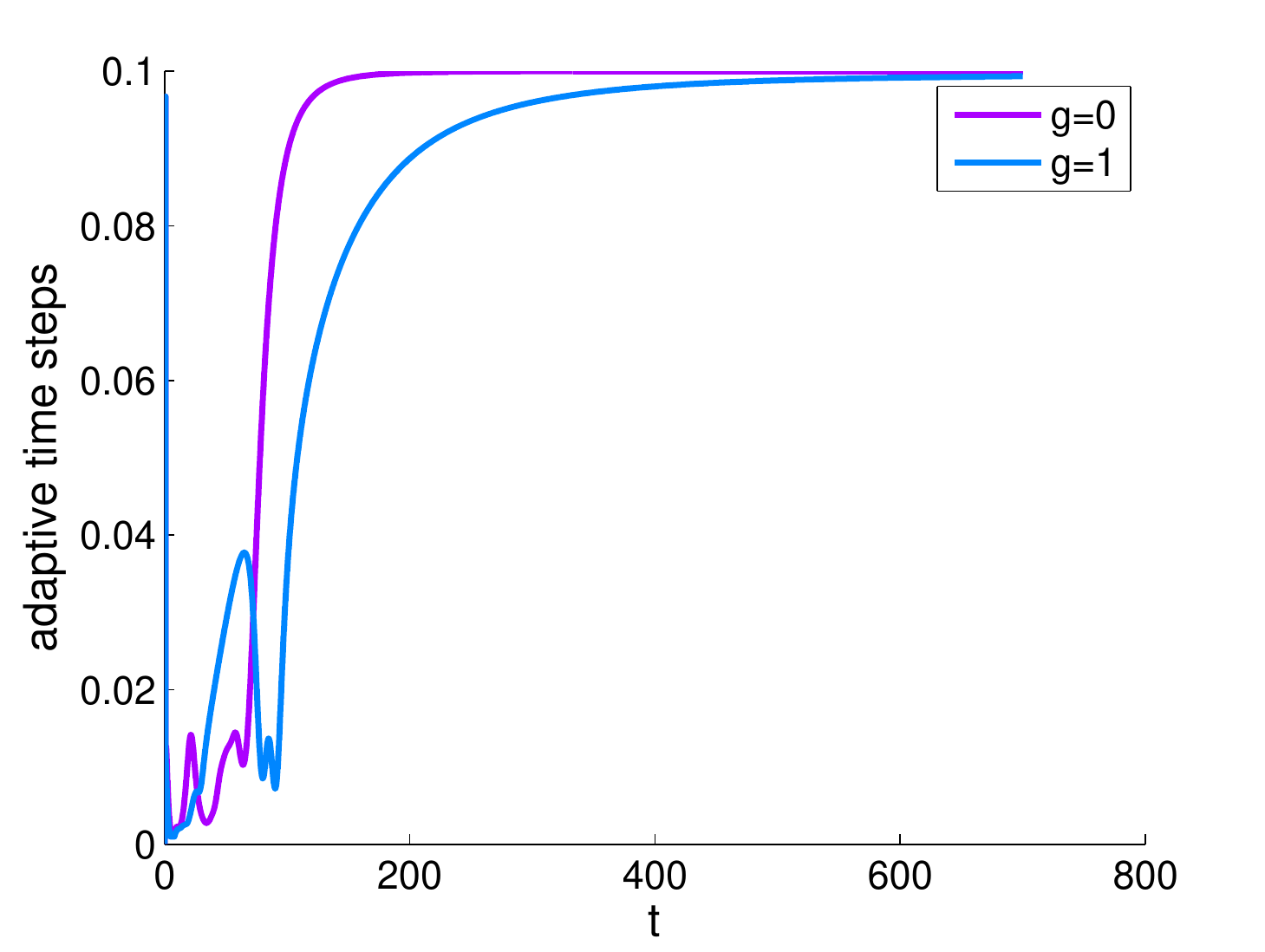}}
	\caption{Evolution of energy with $\alpha=0.6$ and different $\mathrm{g}$}
	\label{fig:energy_g}
\end{figure}

Figure \ref{fig:soluton_alpha_g} shows that the pattern formation of the TFSH model \eqref{def:the TFSH equation} is affected by the order of fractional derivative as well, especially when $\mathrm{g}$ is small. The curves of the energy with $\mathrm{g}=0.2$ are depicted in Figure \ref{fig:energy_alpha}, which indicate that the energy decays rapidly in all cases and the speed with small fractional order is slower than that of the large one.

\begin{figure} [tbh!]
	\centering
	\includegraphics[height=240pt,width=6.0in]{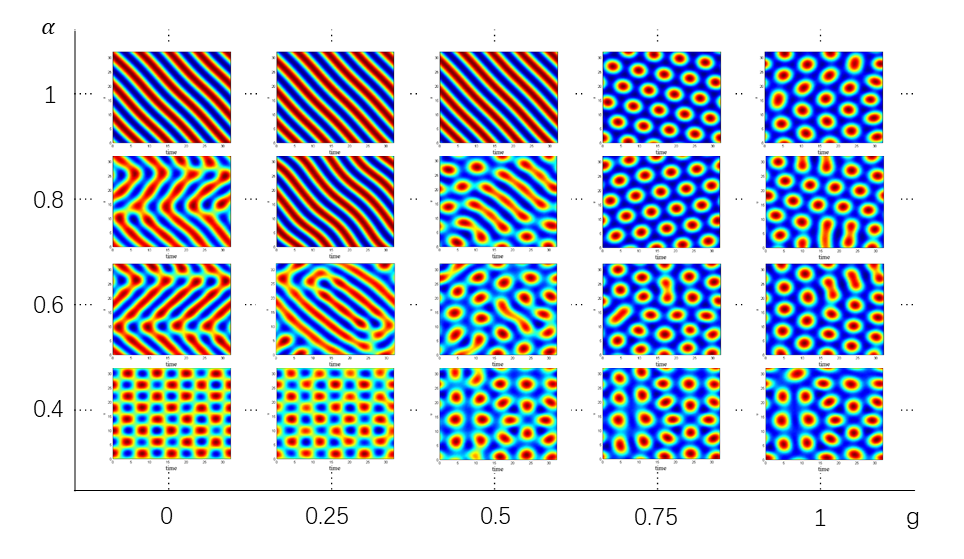}
	\caption{$u(x,y,512)$ for various $\alpha$ and $\mathrm{g}$.}
	\label{fig:soluton_alpha_g}
\end{figure}

\begin{figure}[tbh!]
	\centering
	\subfigure[energy]{
		\includegraphics[width=2in]{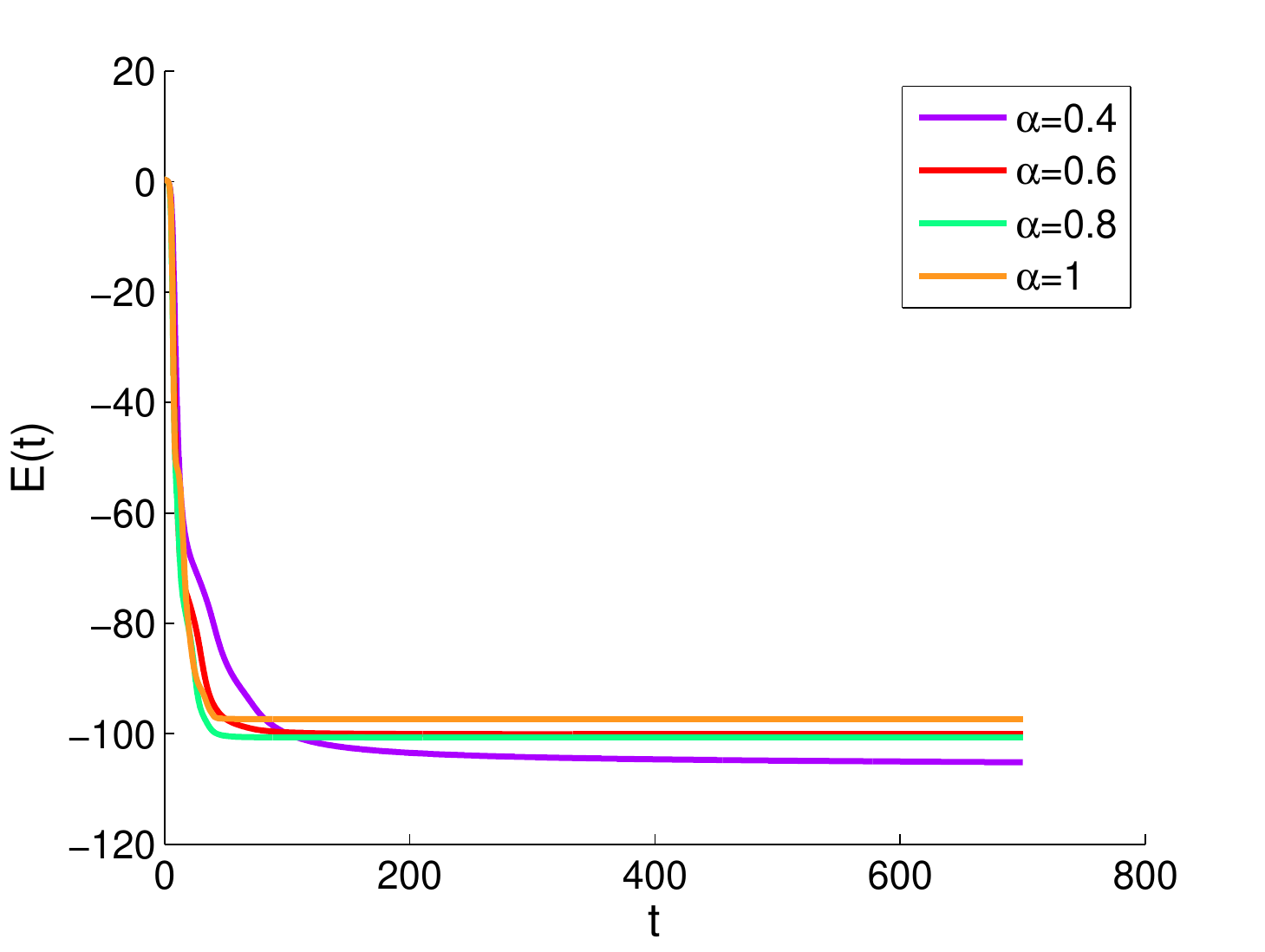}}
	\subfigure[vatiational energy]{
		\includegraphics[width=2in]{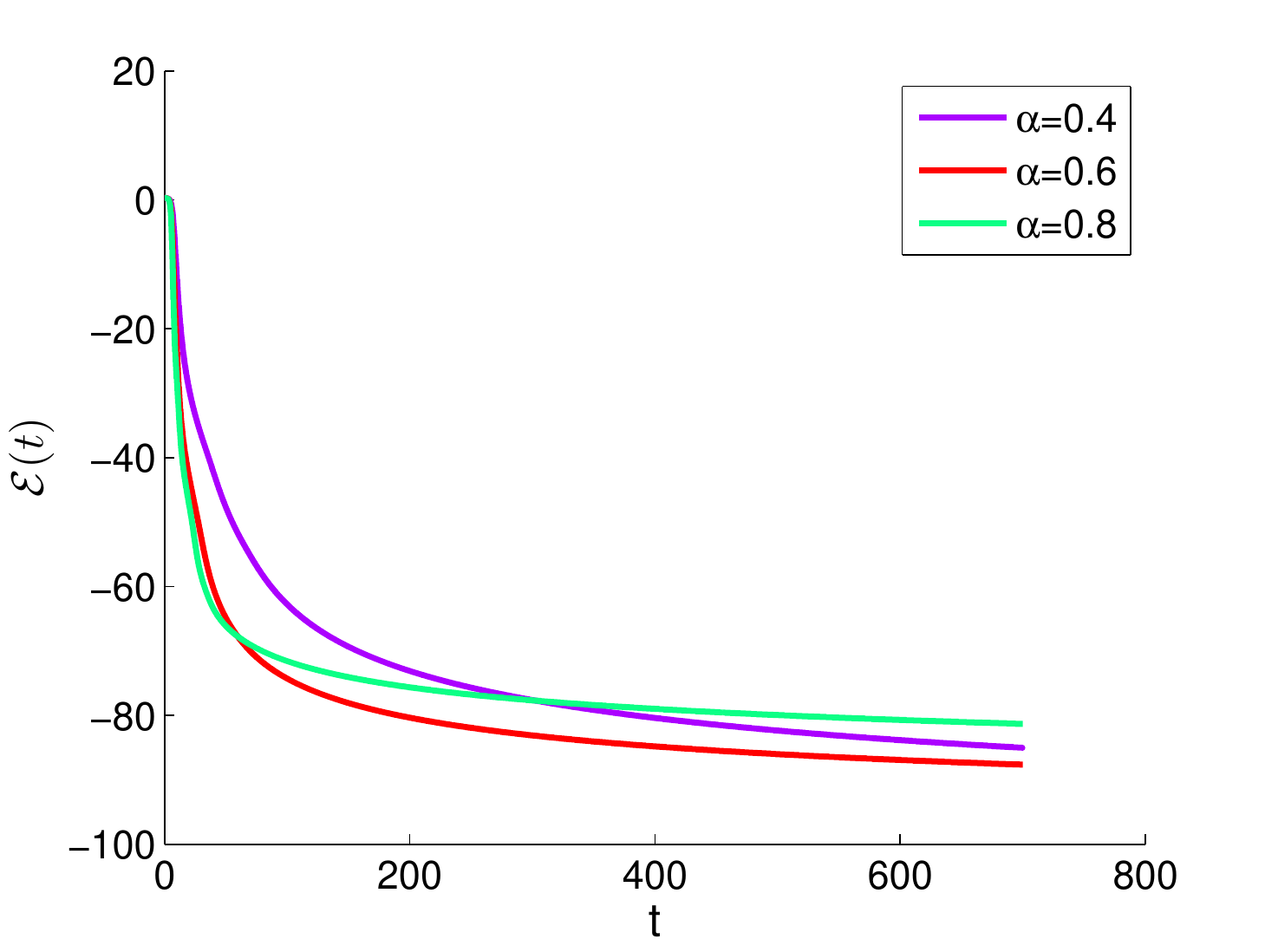}}
	\subfigure[time steps]{
		\includegraphics[width=2in]{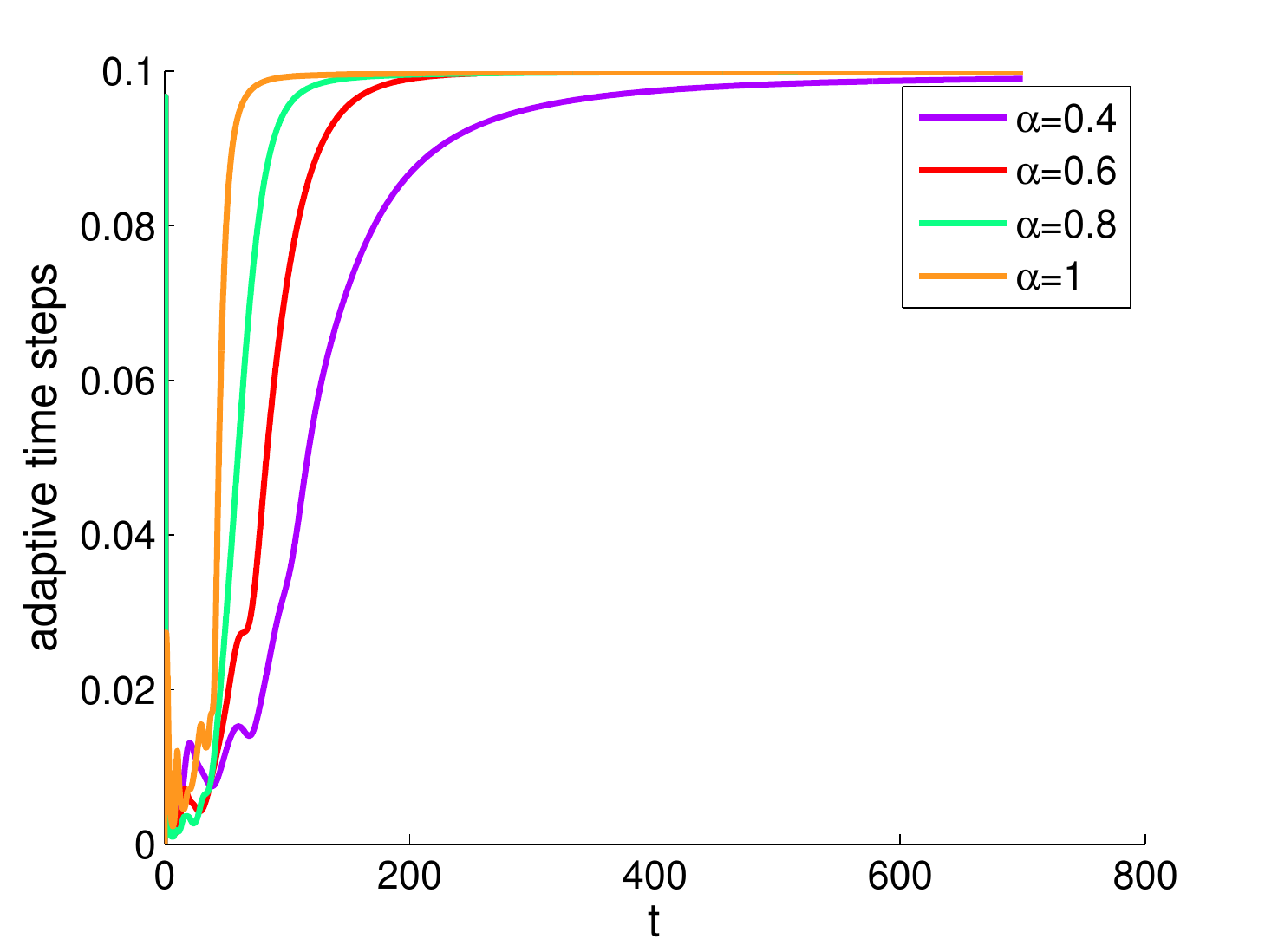}}
	
	\caption{Evolution of energy with $\mathrm{g}=0.2$ and different $\alpha$}
	\label{fig:energy_alpha}
\end{figure}
\section{Conclusions}
The variable-step L1 scheme for the TFSH equation (1.3) is derived and analyzed in this paper. By taking advantage of the discrete gradient structure of the L1 formula on the nonuniform mesh, the numerical scheme is proved to satisfy the discrete energy dissipation law. Furthermore, the $L^2$ norm error estimate of the numerical scheme is given in virtue of the global consistency error analytical technique. Based on the theoretical results, the long time simulations of the energies are shown numerically. Besides, the effects for the order of the fractional derivative and the cubic term in the energy function on the pattern formation are also presented in the computations. In the future work, high order discretizations for the time fractional derivative could be considered to establish compatible energies for the TFSH equation (1.3) or even higher order space derivative models. 
\section*{Acknowledgement}
We would like to acknowledge support by the National Natural Science Foundation of China (No. 11701081,11861060), the Fundamental Research Funds for the Central Universities, Key Project of Natural Science Foundation of China (No. 61833005) and ZhiShan Youth Scholar Program of SEU, China Postdoctoral Science Foundation (No. 2019M651634), High-level Scientific Research foundation for the introduction of talent of Nanjing Institute of Technology (No. YKL201856).


\begin{thebibliography}{10}
	
	
 \bibitem{SH}
 J. Swift, P. C. Hohenberg, Hydrodynamic fluctuations at the convective instability, Phys. Rev. A, 15 (1977), 319-328.
 
 \bibitem{Ibbeken2019}
 G. Ibbeken, G. Green, M. Wilczek, Large-scale pattern formation in the presence of small-scale random advection, Phys. Rev. Lett., 123 (2019), 114501.
 
 \bibitem{Kud2012}
 N. A. Kudryashov, D. I. Sinelshchikov, Exact solutions of the Swift–Hohenberg equation with dispersion, Commun. Nonlinear Sci. Numer. Simulat., 17 (2012), 26–34.
 
 \bibitem{Lega}
 J. Lega, N. H. Mendelson, Control-parameter-dependent Swift-Hohenberg equation as a model for bioconvection patterns, Phys. Rev. E, 59 (1999), 6267-6274.
 
 \bibitem{leesym}
 H. G. Lee, Numerical simulation of pattern formation on surfaces using an efficient linear second-order method, Symmetry, 11 (2019), 1010.
 
 \bibitem{Rosa2000}
 R. R. Rosa, J. Pontes, C. I. Christov, F. M. Ramos, C. Rodrigues Neto, E. L. Rempel, D. Walgraef, Gradient pattern analysis of Swift-Hohenberg dynamics: phase disorder characterization, Physica A, 283 (2000), 156–159.
 
 \bibitem{leecmame}
 H. G. Lee, An energy stable method for the Swift–Hohenberg equation with quadratic–cubic nonlinearity, Comput. Methods Appl. Mech. Eng., 343 (2019), 40–51.
 
 \bibitem{Du1991SINA}	
 Q. Du, R. A. Nicolaides, Numerical analysis of a continuum model of phase transition, SIAM J. Numer. Anal., 28 (1991), 1310-1322.
 
 \bibitem{XuTang2006SINA}
 C. Xu, T. Tang, Stability analysis of large time-stepping methods for epitaxial growth models, SIAM J. Numer. Anal., 44 (2006), 1759-1779.
 
 \bibitem{Cartea}	
 A. Cartea, D. Del-Castillo-Negrete, Fractional diffusion models of option prices in markets with jumps, Physica A, 374 (2007), 749–763.
 
 \bibitem{chenwen2006}	
 W. Chen, A speculative study of 2/3-order fractional Laplacian modeling of turbulence: Some thoughts and conjectures, Chaos, 16 (2006), 023126.
 
 \bibitem{Qureshi}
 S. Qureshi, A. Yusuf, A. A. Shaikh, M. Inc, Transmission dynamics of varicella zoster virus modeled by classical and novel fractional operators using real statistical data, Physica A, 534 (2019), 122149.
 
 \bibitem{Liz2017}
 Z. Li, H. Wang, D. Yang, A space-time fractional phase-field model with tunable sharpness and decay behavior and its efficient numerical simulation, J. Comput. Phys., 347
 (2017), 20-38.
 
 \bibitem{songfcmame2016}
 F. Song, C. Xu, G. E. Karniadakis, A fractional phase-field model for two-phase flows with tunable sharpness: Algorithms and simulations, Comput. Methods Appl. Mech. Engrg., 305 (2016), 376-404.
 
 \bibitem{shamseldeen}
 S. Shamseldeen, Approximate solution of space and time fractional higher
 order phase field equation, Physica A, 494 (2018), 308-316.
 
 \bibitem{zhaojcns2019}
 J. Zhao, L. Chen, H. Wang, On power law scaling dynamics for time-fractional phase field models during coarsening, Commun. Nonlinear Sci., 70 (2019), 257-270.
 
 \bibitem{tangtsijsc2019}
 T. Tang, H. Yu, T. Zhou, On energy dissipation theory and numerical stability for time-fractional phase-field equations, SIAM J. Sci. Comput., 41 (2019), A3757-A3778.
 
 \bibitem{quancsiam}
 C. Y. Quan, T. Tang, J. Yang, How to define dissipation-preserving energy for time-fractional phase-field equations, CSIAM Trans. Appl. Math., 1 (2020), 478-490.
 
 \bibitem{liaozhuwangnmtma}
 H.-L. Liao, X. Zhu, J. Wang, The variable-step L1 scheme preserving a compatible energy law for time-fractional Allen-Cahn equation, arXiv preprint, arXiv:2102.07577.
 
 \bibitem{YangJCP2022}
 Y. Yang, J. Wang, Y. Chen, H.-L. Liao, Compatible $L^2$ norm convergence of variable-step L1 scheme for the time-fractional MBE model with slope selection, J. Comput. Phys., 467 (2022), 111467.
 
 \bibitem{quanwb}
 C. Y. Quan, B. Y. Wang, Energy stable L2 schemes for time-fractional phase-field equations, J. Comput. Phys., 458 (2022), 111085.
 
 \bibitem{quanarxiv}
 C. Y. Quan, T. Tang, J. Yang, Numerical energy dissipation for time-fractional phase-field equations, arXiv preprint, arXiv:2009.06178.
 
 \bibitem{vp2020}
 P. Veeresha, D. G. Prakasha, D. Baleanu, Analysis of fractional Swift-Hohenberg equation using a novel computational technique, Math. Meth. Appl. Sci., 43 (2020), 1970–1987.
 
 \bibitem{PD2019}
 D. G. Prakasha, P. Veeresha, H. M. Baskonus, Residual power series method for fractional Swift–Hohenberg equation, Fractal Fract., 3 (2019), 9.
 
 \bibitem{mm2013}
 M. Merdan, A numeric–analytic method for time-fractional Swift–Hohenberg (S-H) equation with modified Riemann–Liouville derivative, Appl. Math. Model., 37 (2013), 4224-4231.
 
 \bibitem{kan2011}
 N. A. Khan, N.-U. Khan, M. Ayaz, A. Mahmood, Analytical methods for solving the time-fractional Swift–Hohenberg (S-H) equation, Comput. Math. Appl., 61 (2011), 2182–2185. 
 
 \bibitem{Rashid2021}
 S. Rashid, R. Ashraf, F. S. Bayones, A novel treatment of fuzzy fractional Swift–Hohenberg equation for a hybrid transform within the fractional
 derivative operator, Fractal. Fract., 5 (2021), 209.
 
 \bibitem{zahra2019}
 W. K. Zahra, S. M. Elkholy, M. Fahmy, Rational spline-nonstandard finite difference scheme for the solution of time-fractional Swift–Hohenberg equation, Appl. Math. Comput., 343 (2019), 372-387.
 
 \bibitem{zahra2020}
 W. K. Zahra, M. A. Nasr, D. Baleanu, Time-fractional nonlinear Swift-Hohenberg equation: Analysis and numerical simulation, Alex. Eng. J., 59 (2020), 1970–1987.
 
 \bibitem{nonlocalSH}
 Z. Weng, Y. Deng, Q. Zhuang, S. Zhai, A fast and efficient numerical algorithm for Swift–Hohenberg equation with a nonlocal nonlinearity, Appl. Math. Lett., 118 (2021), 107170.
 
 \bibitem{liaottsiamjsc2021}
 H.-L. Liao, T. Tang, T. Zhou, An energy stable and maximum bound preserving scheme with variable time steps time fractional Allen-Cahn equations, SIAM. J. Sci. Comput., 43(2021), A3503-A3526.
 
 \bibitem{Akrivis1993}	
 G. D. Akrivis, Finite difference discretization of the cubic Schr\"{o}dinger equation, IMA J. Numer. Anal., 13 (1993), 115-124.
 
\bibitem{liaoyanyg}
H.-L. Liao, Y. Yan, J. Zhang, Unconditional convergence of a fast two-level linearized algorithm for semilinear subdiffusion equations, J. Sci. Comput., 80 (2019), 1-25.
 
 \bibitem{sunhong2022}
 H. Sun, X. Zhao, H. Y. Cao, R. Yang, M. Zhang, Stability and convergence analysis of adaptive BDF2 scheme for the Swift-Hohenberg equation, Commun. Nonlinear Sci. Numer. Simulat., 111 (2022), 106412.
 
 \bibitem{martin2018}
 M. Stynes, E. O'riordan, J. L. Gracia, Error analysis of a finite difference method on graded meshes for a time-fractional diffusion equation, SIAM. J. Numer. Anal., 55 (2017), 1057-1079.
 
 \bibitem{MclenMustapha}
 W. Mclean, K. Mustapha, A second-order accurate numerical method for a fractional wave equation, Numer. Math., 105 (2007), 481-510. 
 
 \bibitem{jiangshidong}
 S. Jiang, J. Zhang, Q. Zhang, Z. Zhang, Fast evaluation of the Caputo fractional derivative and its applications to fractional diffusion equations, Commun. Comput. Phys., 21 (2017), 650-678.
 
 \bibitem{yangjxkim2022}
 J. Yang, J. Kim, Energy dissipation–preserving time-dependent
 auxiliary variable method for the phase-field crystal and the Swift–Hohenberg models, Numer. Algorithms, 89 (2022), 1865-1894.
 
 
\end{thebibliography}
\end{document}